\documentclass[12pt,leqno]{amsart}
\usepackage{amsfonts,amsthm,amsmath,comment,xcolor,soul}

\theoremstyle{plain}
\newtheorem{thm}{Theorem}[section]
\newtheorem{prop}{Proposition}[section]
\newtheorem{lem}{Lemma}[section]
\newtheorem{cor}{Corollary}[section]

\theoremstyle{definition}
\newtheorem{df}{Definition}[section]
\newtheorem{rem}{Remark}[section]
\newtheorem{ex}{Example}[section]

\newcommand{\FF}{\mathbb{F}}
\newcommand{\ZZ}{\mathbb{Z}}
\newcommand{\RR}{\mathbb{R}}
\newcommand{\CC}{\mathbb{C}}

\newcommand{\Z}{\mathbb{Z}}

\newcommand{\R}{\mathbb{R}}

\def\bm#1{\mathbf{#1}}

\DeclareMathOperator{\supp}{supp}
\DeclareMathOperator{\Supp}{Supp}

\DeclareMathOperator{\wt}{wt}

\DeclareMathOperator{\Harm}{Harm}


\begin{document}

\title{{
Harmonic Tutte polynomials of matroids II
}
}

\author[Britz]{Thomas Britz}
\address
	{
		School of Mathematics and Statistics,
		University of New South Wales\\
		Sydney, NSW 2052, Australia
	}
\email{britz@unsw.edu.au}

\author[Chakraborty]{Himadri Shekhar Chakraborty*}
\thanks{*Corresponding author}
\address
	{
		Department of Mathematics, 
		Shahjalal University of Science and Technology\\ Sylhet-3114, Bangladesh\\
	}
\email{himadri-mat@sust.edu}

\author[Ishikawa]{Reina Ishikawa}
\address
	{
		Faculty of Science and Engineering, 
		Waseda University, 
		Tokyo 169-8555, Japan\\ 
	}
\email{reina.i@suou.waseda.jp}

\author[Miezaki]{Tsuyoshi Miezaki}
\address
	{
		Faculty of Science and Engineering, 
		Waseda University, 
		Tokyo 169-8555, Japan\\ 
	}
\email{miezaki@waseda.jp}

\author[Tang]{Hopein Christofen Tang}
\address
	{
		School of Mathematics and Statistics,
		University of New South Wales\\
		Sydney, NSW 2052, Australia	
	}
\email{hopein.tang@unsw.edu.au}

\date{}
\maketitle

\begin{abstract}
In this work, we introduce the harmonic generalization of 
the $m$-tuple weight enumerators of codes over finite Frobenius rings.
A harmonic version of the MacWilliams-type identity for $m$-tuple weight enumerators
of codes over finite Frobenius ring is also given.
Moreover, we define the demi-matroid analogue of well-known polynomials from 
matroid theory, namely Tutte polynomials and coboundary polynomials, 
and associate them with a harmonic function. 
We also prove the Greene-type identity relating these polynomials 
to the harmonic $m$-tuple weight enumerators of codes over finite Frobenius rings.
As an application of this Greene-type identity,
we provide a simple combinatorial proof of the MacWilliams-type identity for 
harmonic $m$-tuple weight enumerators over finite Frobenius rings. 
Finally, we provide the structure of the relative invariant spaces
containing the harmonic $m$-tuple weight enumerators of self-dual codes over finite fields.
\end{abstract}

{\small
\noindent
{\bfseries Key Words:}
Tutte polynomials, coboundary polynomials, weight enumerators, demi-matroids, harmonic functions.\\ \vspace{-0.15in}

\noindent
2010 {\it Mathematics Subject Classification}. 
Primary 11T71;
Secondary 94B05, 11F11.\\ \quad
}


\section{Introduction}

In 1963, MacWilliams~\cite{MacWilliams} introduced a powerful identity 
relating the weight enumerator of a linear code to that of its dual.
This identity, called the MacWilliams Identity, 
has many important applications in the theory of algebraic coding,
including the central problem of code classification.
In 1976, Greene~\cite{Greene1976} showed that the weight enumerator of a linear code
is obtained by evaluating the Tutte polynomial of the matroid associated with the code.
Using this identity, now known as Greene's Theorem, Greene provided 
a short and elegant new proof of the MacWilliams Identity.
This was the first significant demonstration of how matroid theory relates usefully to coding theory.

The MacWilliams Identity and Greene's Theorem 
have since been generalized by many authors.
Britz and Shiromoto~\cite{BS2008} 
generalized the MacWilliams Identity with respect to matroids.
They used this result to generalize Greene's Theorem with respect to $m$-tuples of codewords
and to provide a short proof of the $m$-tuple generalization of 
the MacWilliams Identity due to Shiromoto~\cite{shiromoto96}.
A different type of generalization of the MacWilliams Identity was 
introduced by Bachoc~\cite{Bachoc} for binary linear codes.
Bachoc generalized the weight enumerator of a linear code with respect to the 
discrete harmonic functions on a finite set introduced by Delsarte~\cite{Delsarte},
and presented a formula for computing harmonic functions using Hahn polynomials~\cite{Delsarte, Karlin}.
Bachoc~\cite{BachocNonBinary} and Tanabe~\cite{Tanabe2001} independently 
extended this harmonic weight MacWilliams-type identity to linear codes over~$\mathbb{F}_q$.
Britz, Shiromoto and Westerb\"ack~\cite{BSW2015} generalized Greene's Theorem 
with respect to $m$-tuple weight enumerators for linear codes over finite Frobenius rings 
and, more recently, 
Chakraborty, Miezaki and Oura~\cite{CMO20xx} generalized Greene's Theorem with respect to harmonic weight enumerators.

In this paper,
we generalise all of these previous results by introducing 
harmonic $m$-tuple weight enumerators for codes over finite Frobenius rings
as well as harmonic Tutte polynomials (and, equivalently, coboundary polynomials) for demi-matroids.
We present the Greene-type identity for these enumerators and polynomials, 
and we apply this identity to obtain the MacWilliams-type identity for 
the harmonic $m$-tuple weight enumerator of a code over a finite Frobenius ring.
We also construct the relative invariant spaces that contain
the harmonic $m$-tuple weight enumerators of self-dual codes over finite fields,
in particular the four classical types of codes introduced by Gleason~\cite{Gleason}.

This paper is organized as follows.
In Section~\ref{Sec:Preli}, 
we present basic definitions and properties 
that are frequently used in this paper, 
of discrete harmonic functions, 
codes over finite Frobenius rings and 
demi-matroids; 
see in particular Theorems~\ref{thm: Bachoc iden.}, \ref{Thm:Contraction} and \ref{Thm:CodeContraction}.
In Section~\ref{Sec:MacWilliams}, 
we define the harmonic $m$-tuple weight enumerators for codes over finite Frobenius rings,
and give some formulae to compute these; 
see Theorems~\ref{Thm:NewZ} and~\ref{Thm:NewZperp}. 
Moreover, we present the MacWilliams-type identity for 
harmonic $m$-tuple weight enumerators of codes over finite Frobenius rings; see Theorem~\ref{Thm:MacWilliams}. 
In Section~\ref{Sec:Greene},
we obtain the correspondences between the Tutte polynomials (resp., coboundary polynomials) 
of demi-matroids and their duals; see Theorems~\ref{Thm:DualSupp} and~\ref{Thm:CoboundaryDualSupp}.
Moreover, we give the Greene-type identity between these polynomials 
and the harmonic $m$-tuple weight enumerators for codes over finite Frobenius rings;
see Theorem~\ref{Thm:Greene}.
As an application of this identity, 
we provide a simple proof of the MacWilliams-type identity for the
harmonic $m$-tuple weight enumerators of codes over finite Frobenius rings. 
Finally, in Section~\ref{Sec:Inv}, 
we provide the relative invariant spaces of groups corresponding
to certain type of codes; see Theorem~\ref{Thm:Invariant}. 

All computer calculations in this paper were done with the help of Magma~\cite{Magma}
and Mathematica~\cite{Mathematica}.

\section{Preliminaries}\label{Sec:Preli}

In this section, we discuss the basic definitions and notations of linear codes 
and demi-matroids that are frequently needed in this paper. 
We review~\cite{BJMS2012,BSW2015, CR1970, MS1977} for this discussion. 
Moreover, recall~\cite{Bachoc, Delsarte} for the definitions and properties of the (discrete) harmonic functions.

\subsection{Discrete harmonic functions}

Let $E := \{1,2,\ldots,n\}$ be a finite set.
We define  
$E_{d} := \{ X \subseteq E : |X| = d\}$
for $d = 0,1, \ldots, n$. 
The set of all subsets of $E$ is denoted by $2^{E}$.
We denote by 
$\R 2^{E}$ and $\R E_{d}$
the real vector spaces spanned by the elements of  
$2^{E}$ and $E_{d}$,
respectively. 
An element of 
$\R E_{d}$
is denoted by
\begin{equation}\label{Equ:FunREd}
	f :=
	\sum_{Z \in E_{d}}
	f(Z) Z
\end{equation}
and is identified with the real-valued function on 
$E_{d}$
given by 
$Z \mapsto f(Z)$. 
Such an element 
$f \in \R E_{d}$
can be extended to an element 
$\widetilde{f}\in \R 2^{E}$
by setting, for all 
$X \in 2^{E}$,
\begin{equation}\label{Equ:TildeF}
    \widetilde{f}(X)
	:=
	\sum_{Z\in E_{d}, Z\subseteq X}
	f(Z).
\end{equation}
If an element 
$g \in \R 2^{E}$
is equal to $\widetilde{f}$  
for some $f \in \R E_{d}$, 
then we say that $g$ has degree $d$. 
The differentiation $\gamma$ is the operator on $\mathbb{R}E_d$ 
defined by linearity from the identity
\begin{equation}\label{Equ:Gamma}
	\gamma(Z) := 
	\sum_{Y\in {E}_{d-1}, Y\subseteq Z} 
	Y
\end{equation}
for all 
$Z \in E_{d}$
and for all $d=0,1, \ldots n$.
Also, $\Harm_{d}(n)$ is the kernel of~$\gamma$:
\begin{equation}\label{Equ:Harm}
	\Harm_{d}(n) 
	:= 
	\ker
	\left(
	\gamma\big|_{\R E_{d}}
	\right).
\end{equation}

 \begin{rem}{\rm\cite{Bachoc,Delsarte}}\label{Rem:Gamma}
	Let $f \in \Harm_{d}(n)$ and $i \in \{0,1,\ldots,d-1\}$.
	Then 
	$\gamma^{d-i}(f) = 0$. 
    This means from definition~(\ref{Equ:Gamma})
	that
	$$\sum_{X \in E_{i}}\left(\sum_{\substack{Z \in E_{d}, X \subseteq Z}} f(Z)\right) X = 0.$$
	This implies that $\sum_{\substack{Z \in E_{d}, X \subseteq Z}} f(Z) = 0$
	for any $X \in E_{i}$.
\end{rem}

\begin{rem}\label{Rem:New}
	Let~$f \in \Harm_{d}(n)$. 
	Since $\sum_{Z \in E_{d}} f(Z) = 0$, 
	it is easy to check from~(\ref{Equ:Gamma}) that
	$\sum_{X \in E_{t}} \widetilde{f}(X) = 0$, where
	$1 \leq d \leq t \leq n$.
\end{rem}

\begin{ex}
	Let $E = \{1,2,3,4\}$ and $d =2$. 
    Let $f \in \RR E_{2}$ be the element
	\[
		f 
		= 
		a_1 \{1,2\}
		+
		a_2 \{1,3\}
		+
		a_3 \{1,4\}
		+
		a_4 \{2,3\}
		+
		a_5 \{2,4\}
		+
		a_6 \{3,4\}.
	\]
    Then 
    \[
        \gamma(f)
        =
		(a_1+a_2+a_3) \{1\}
		+
		(a_1+a_4+a_5)\{2\}
		+
		(a_2+a_4+a_6)\{3\}
		+
		(a_3+a_5+a_6)\{4\}.    
    \]
	If $X = \{1,3,4\}$,
    then $\widetilde{f}(X) = a_2+a_3+a_6$.
	Suppose that $f \in \Harm_{2}(4)$; 
    then $\gamma(f) = 0$, 
    so 
    \[
      a_1+a_2+a_3 = a_1+a_4+a_5 = a_2+a_4+a_6 = a_3+a_5+a_6 = 0.
	\]
    Solving the above equations, we can get
	\begin{multline*}
		\Harm_{2}(4)
		\ni
		f
		=
		a_1 \{1,2\}
		+
		a_2 \{1,3\}
		+
		(-a_1-a_2) \{1,4\}\\
		+
		(-a_1-a_2) \{2,3\}
		+
		a_2 \{2,4\}
		+
		a_1 \{3,4\}.		
	\end{multline*}
\end{ex}

\subsection{Linear codes}

Let $R$ be a finite ring with identity satisfying the associative property. 
Let $V:=R^{n}$ be the free $R$-module with ordinary inner product
$\bm{u}\cdot\bm{v} := u_{1}v_{1} + \cdots + u_{n}v_{n}$
for $\bm{u},\bm{v} \in V$,
where
$\bm{u} = (u_{1},\ldots,u_{n})$ and $\bm{v} = (v_{1},\ldots,v_{n})$.
For $\bm{u} \in V$, we call
$\supp(\bm{u}) := \{i \in E \mid u_{i} \neq 0\}$
the \emph{support} of $\bm{u}$,
and
$\wt(\bm{u}) := |\supp(\bm{u})|$
the \emph{weight} of $\bm{u}$. 
Similarly, the \emph{support} and \emph{weight}
of each subset $A \subseteq V$ are defined as follows:
\begin{align*}
	\Supp(A) 
	& :=
	\bigcup_{\bm{u} \in A}
	\supp(\bm{u})\\
	\wt(A) 
	& :=
	|\Supp(A)|.
\end{align*}

A \emph{left} (or \emph{right}) $R$-\emph{linear code} of length~$n$ is a left (or right) $R$-submodule of $V$.
The elements of a code are known as codewords. 
We denote the \emph{left dual code} of a right $R$-linear code~$C$ by $^{\perp}C$ 
and the \emph{right dual code} of a left $R$-linear code~$D$ by $D^{\perp}$, 
and define as follows:
\begin{align*}
	^{\perp}C
	:=
	\{
		\bm{u} \in V
		\mid
		\bm{u} \cdot \bm{v} = 0
		\mbox{ for all }
		\bm{v} \in C
	\},\\
	D^{\perp}
	:=
	\{
		\bm{u} \in V
		\mid
		\bm{v} \cdot \bm{u} = 0
		\mbox{ for all }
		\bm{v} \in D
	\}.
\end{align*}
It is immediate from the above definition that
$^\perp C$ is a left $R$-submodule of $V$,
and $D^\perp$ is a right $R$-submodule of $V$.

Let $C$ be a left (or right) $R$-linear code of length~$n$. 
Then for each subset $X \subseteq E$ with $t$ coordinates, 
the \emph{punctured code} $C\backslash X$ is the 
left (or right) $R$-submodule of $R^{n-t}$ obtained by 
deleting the coordinates $X$ from each codeword of~$C$.
Similarly, the \emph{shortened code} $C/X$ is the 
right (or left) submodule of $R^{n-t}$ obtained by 
deleting the coordinates $X$ from the codewords of $C$ which are zero on $X$. 
Define the right (or left) $R$-submodule
\[
	C(X)
	:=
	\{
	\bm{u} \in C
	\mid
	\supp(\bm{u}) \subseteq X
	\}.
\]
Note that $C(X)\backslash (E-X) = C/(E-X)$. 
Then there is an exact sequence (see~\cite{BSW2015}) of right (or left) 
$R$-modules:
\[
	0
	\longrightarrow
	C(E-X)
	\overset{\mathrm{inc}}{\longrightarrow}
	C
	\overset{\mathrm{punc}}{\longrightarrow}
	C\backslash (E-X)
	\longrightarrow
	0,
\]
where the maps $\mathrm{inc}$ and $\mathrm{punc}$ denote 
the inclusion map and puncture map, respectively. 
Therefore, we have from~\cite{BSW2015} the following lemma.

\begin{lem}\label{Lem:Lem3}
	$|C|=|C/X|\cdot |C\backslash (E-X)|$.
\end{lem}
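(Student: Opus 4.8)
The plan is to read off the cardinality identity directly from the short exact sequence displayed just before the statement, together with the observation $C(X)\backslash(E-X) = C/(E-X)$ applied with the roles of $X$ and $E-X$ interchanged.

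First I would recall the elementary fact that for any short exact sequence $0 \to A \to B \to B' \to 0$ of finite modules over any ring, one has $|B| = |A|\cdot|B'|$, since $B$ is partitioned into $|B'|$ cosets of the image of $A$, each of cardinality $|A|$. Applying this to the exact sequence
\[
	0 \longrightarrow C(E-X) \overset{\mathrm{inc}}{\longrightarrow} C \overset{\mathrm{punc}}{\longrightarrow} C\backslash(E-X) \longrightarrow 0
\]
gives $|C| = |C(E-X)|\cdot|C\backslash(E-X)|$.

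Next I would identify $|C(E-X)|$ with $|C/X|$. By definition, $C(E-X)$ consists of the codewords of $C$ whose support is contained in $E-X$, i.e.\ which vanish on the coordinates in $X$; hence the map that deletes the coordinates in $X$ is injective on $C(E-X)$, and its image is exactly $C(E-X)\backslash X$. But the stated identity $C(X)\backslash(E-X) = C/(E-X)$, read with $X$ replaced by $E-X$, says precisely that $C(E-X)\backslash X = C/X$. Therefore deletion of the $X$-coordinates is an $R$-module isomorphism $C(E-X) \overset{\sim}{\longrightarrow} C/X$, so $|C(E-X)| = |C/X|$. Substituting into the previous display yields $|C| = |C/X|\cdot|C\backslash(E-X)|$, as claimed.

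There is no substantive obstacle here; the only points requiring minor care are that $R$ need not be commutative and the modules need not be free, so the argument must proceed via cardinalities of cosets rather than dimensions, and one should check that the puncture map genuinely restricts to a bijection onto the shortened code on the submodule of codewords supported off $X$ — which is immediate from the definitions of $C(E-X)$ and of $C/X$.
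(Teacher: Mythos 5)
Your argument is correct and follows exactly the route the paper intends: it reads the identity off the displayed exact sequence $0 \to C(E-X) \to C \to C\backslash(E-X) \to 0$ and then identifies $C(E-X)$ with $C/X$ via the observation $C(X)\backslash(E-X) = C/(E-X)$ with $X$ and $E-X$ interchanged. The paper simply cites~\cite{BSW2015} for this; you have supplied the same derivation with the details filled in.
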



Bachoc~\cite{Bachoc} introduced harmonic weight enumerators for binary codes 
and proved a MacWilliams-type identity for these enumerators. 
Later, this concept was extended in~\cite{BachocNonBinary,Tanabe2001} to linear codes over any finite field. 
We define the harmonic weight enumerators for linear codes over finite rings as follows.  

\begin{df}\label{DefHarmWeightBachoc}
	Let $C$ be a left (or right) $R$-linear code of length~$n$. 
    Let $f\in\Harm_{d}(n)$. 
	Then the \emph{harmonic weight enumerator} of $C$ associated to $f$ is
	defined as follows:

   \[	
	    W_{C,f}(x,y) 
	    :=
	    \sum_{X \subseteq E}
	    \widetilde{f}(X)
	    A_{C}(X)
	    x^{n-|X|}
	    y^{|X|},
    \]
    where 
    \[
    	A_{C}(X)
    	:= 
    	\#\{\bm{u} \in C \mid \supp(\bm{u}) = X\}.
    \]
\end{df}

Before presenting the definition of a finite Frobenius ring, 
we need to give some notations about modules over rings.
Let $R$ be a finite ring. 
Then we shall write $_R M$ (or $M_R$) to denote a left (or right) $R$-module $M$. 
Recall the Jacobson radical of $R$, denoted by $\textrm{Rad}(R)$,
and the socal of an $R$-module $M$, denoted by $\textrm{Soc(M)}$; 
for detailed information on $\textrm{Rad}(R)$ and $\textrm{Soc(M)}$, see~\cite{Lam1999}. 

A finite ring $R$ is called a \emph{Frobenius ring} if it satisfies the following conditions:
\begin{itemize}
	\item [(i)]
	$R/\textrm{Rad}(R) \cong \textrm{Soc}(R_R)$ as a right $R$-module;
	
	\item [(ii)]
	$R/\textrm{Rad}(R) \cong \textrm{Soc}(_R R)$ as a left $R$-module.
\end{itemize}

The proof of the following characterization of codes over finite Frobenius rings can
be found in Proposition~$1$ of~\cite{BSW2015}.

\begin{thm}
	Let $R$ be a finite ring.  
	Then the following statements are equivalent for each subset $X \subseteq E$:
	\begin{itemize}
		\item [(a)]
		$R$ is a Frobenius ring;
		
		\item [(b)]
		$|C\backslash (E-X)|\cdot |C^{\perp}/(E-X)| = |R|^{|X|}$ 
		for each left $R$-linear code~$C$;
		
		\item [(c)]
		$|C\backslash (E-X)|\cdot |^{\perp}C/(E-X)| = |R|^{|X|}$ 
		for each right $R$-linear code~$C$.
	\end{itemize}
\end{thm}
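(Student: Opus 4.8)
The plan is to reduce the statement to the classical characterisation of finite Frobenius rings due to Wood: a finite ring $R$ is Frobenius if and only if $|C|\cdot|C^{\perp}| = |R|^{m}$ for every left $R$-linear code $C$ of length $m$, equivalently if and only if $|C|\cdot|{}^{\perp}C| = |R|^{m}$ for every right $R$-linear code $C$ of length $m$. This equivalence rests on the structure theory of finite rings and on the existence of a generating character of $R$, and I would invoke it as a black box. Everything else is the bookkeeping needed to pass between codes of length $n$ and codes of length $|X|$ via puncturing and shortening, while keeping track of the left/right module structures.

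The key elementary observation is that \emph{shortening the dual equals dualising the puncturing}. Fix $X \subseteq E$, set $Y := E - X$ and $t := |X|$, and let $C$ be a left $R$-linear code of length $n$. For $\bm{w} \in R^{X}$ let $\widehat{\bm{w}} \in V$ be its extension by zeros on $Y$. Since $\widehat{\bm{w}}$ vanishes on $Y$, we have $\bm{c}\cdot\widehat{\bm{w}} = (\bm{c}|_{X})\cdot\bm{w}$ for every $\bm{c} \in C$, so $\bm{w}$ annihilates $C\backslash Y$ if and only if $\widehat{\bm{w}} \in C^{\perp}$; and since $\widehat{\bm{w}}$ is supported on $X$, this happens if and only if $\widehat{\bm{w}}$ represents an element of $C^{\perp}/Y$. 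Hence $\bm{w}\mapsto\widehat{\bm{w}}$ defines an isomorphism of right $R$-modules $(C\backslash Y)^{\perp} \cong C^{\perp}/Y$; in particular $|(C\backslash Y)^{\perp}| = |C^{\perp}/Y|$.

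From here the three implications are short. For (a)$\Rightarrow$(b): $C\backslash Y$ is a left $R$-linear code of length $t$, so Wood's theorem gives $|C\backslash Y|\cdot|(C\backslash Y)^{\perp}| = |R|^{t}$, and substituting the isomorphism above yields $|C\backslash (E-X)|\cdot|C^{\perp}/(E-X)| = |R|^{|X|}$. For (b)$\Rightarrow$(a): every left $R$-linear code $D$ of length $t$ equals $\widehat{D}\backslash Y$, where $\widehat{D}$ is $D$ padded with zeros on $Y$ (a left code of length $n$), and one checks directly that $\widehat{D}^{\perp}/Y = D^{\perp}$; so (b) applied to $C = \widehat{D}$ reads $|D|\cdot|D^{\perp}| = |R|^{t}$, and as $D$ was arbitrary of length $t \geq 1$, Wood's theorem forces $R$ to be Frobenius (taking $X = E$ recovers the familiar form $|C|\cdot|C^{\perp}| = |R|^{n}$; the degenerate case $X = \emptyset$ is vacuous). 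The equivalence (a)$\Leftrightarrow$(c) follows from the same argument with ``left'' and ``right'' interchanged and ${}^{\perp}(-)$ in place of $(-)^{\perp}$. If one prefers, Lemma~\ref{Lem:Lem3} and the exact sequence recorded above allow the same counting to be routed through the identity $|C| = |C/X|\cdot|C\backslash(E-X)|$, but this is not needed.

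The only genuine obstacle is the cited ingredient: the implication ``the cardinality identity holds for all codes of a given length $\Rightarrow$ $R$ is Frobenius'' is the substantive half of Wood's characterisation, and making the argument self-contained would require developing the character bimodule $\widehat{R} = \Hom(R,\CC^{\times})$ and the structure theory of finite Frobenius rings. I would cite this rather than reprove it.
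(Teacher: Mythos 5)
Your argument is correct: the identification $(C\backslash Y)^{\perp}=C^{\perp}/Y$ via zero-extension, followed by Wood's cardinality characterisation of finite Frobenius rings applied to the length-$|X|$ punctured code (and, for the converse, to zero-padded codes), is exactly the standard route, and you rightly flag both the vacuous case $X=\emptyset$ and the one genuinely deep ingredient. Note that the paper itself supplies no proof of this statement --- it simply cites Proposition~1 of \cite{BSW2015} --- so your write-up is a faithful reconstruction of the argument given there rather than an alternative to anything in this paper.
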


Now we present the following MacWilliams-type identity. 
We state the identity for left linear codes over finite Frobenius rings. 
Similarly, most of the results in the subsequent sections are stated for left linear codes. 
Each of these statements can be expressed equivalently for right linear codes.

\begin{thm}[MacWilliams-type identity]\label{thm: Bachoc iden.} 
	Let $R$ be a finite Frobenius ring, and $C$ be a left $R$-linear code of length~$n$.
    Let $W_{C,f}(x,y)$ be the harmonic weight enumerator of $C$ 
	associated to $f \in \Harm_{d}(n)$. 
    Then 
	\[
		W_{C,f}(x,y) 
		:= 
		(xy)^{d} Z_{C,f}(x,y),
	\]
	where $Z_{C,f}$ is a homogeneous polynomial of degree $n-2d$ 
    and satisfies
	\[
		Z_{C^{\perp},f}
		(x,y)
		= 
		(-1)^{d} 
		\frac{|R|^{d}}{|C|} 
		Z_{C,f} 
		\left( 
		x+(|R|-1)y, 
		x-y 
		\right).
	\]
\end{thm}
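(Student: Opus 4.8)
The plan is to follow the classical template of the MacWilliams proof (via a character-sum / Fourier-theoretic argument over the finite Frobenius ring), but carrying the harmonic weight function $\widetilde f$ through the computation. First I would record that $W_{C,f}(x,y)=\sum_{\bm u\in C}\widetilde f(\supp(\bm u))\,x^{n-\wt(\bm u)}y^{\wt(\bm u)}$, by reorganizing the sum over supports in Definition~\ref{DefHarmWeightBachoc} into a sum over codewords. The first real step is to show the factorization $W_{C,f}(x,y)=(xy)^d Z_{C,f}(x,y)$ with $Z_{C,f}$ homogeneous of degree $n-2d$: since $f\in\Harm_d(n)$, Remark~\ref{Rem:Gamma} forces $\sum_{Z\in E_d,\,X\subseteq Z}f(Z)=0$ whenever $|X|<d$, which is exactly what kills all monomials $x^{n-|X|}y^{|X|}$ with $|X|<d$ in $W_{C,f}$; so $y^d\mid W_{C,f}$, and by the symmetry $\widetilde f(X)\ne 0\Rightarrow|X|\ge d$ together with the dual "co-degree" vanishing (one checks $\sum_{X\supseteq Z}(\text{something})=0$ for $|E\setminus X|<d$, or more cleanly observes that $\widetilde f$ supported on $|X|\ge d$ plus the complementary harmonic identity gives $x^d\mid W_{C,f}$ as well), one extracts the factor $(xy)^d$ and the remaining polynomial has degree $n-2d$. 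I expect I should be careful here and may prefer to prove the $x^d$ divisibility by applying the MacWilliams transform and using the $y^d$ divisibility on the dual side, so as to not need a separate complementary-degree lemma.

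The heart of the argument is the character computation. Fix a generating character $\chi$ of the additive group of the Frobenius ring $R$; the Frobenius hypothesis guarantees such a $\chi$ exists with the property that $v\mapsto\chi(\langle v,\cdot\rangle)$ exhausts the character group, which is the module-theoretic content underlying the equivalence in the Theorem preceding this one. For a fixed subset $Z\in E_d$, I would introduce the weighted sum $\Lambda_Z(C):=\sum_{\bm u\in C}\mathbf{1}[Z\subseteq\supp(\bm u)]\,x^{n-\wt(\bm u)}y^{\wt(\bm u)}$, so that $W_{C,f}=\sum_{Z\in E_d}f(Z)\Lambda_Z(C)$. For each coordinate $i$ one uses the standard factorization of $\sum_{c\in R}\chi(ac)[\ldots]$ over the contributions $c=0$ and $c\ne 0$: coordinates in $Z$ are forced to be nonzero (contributing a factor $(x-y)$ after the transform), coordinates outside $Z$ contribute $(x+(|R|-1)y)$, and then Poisson summation over the code — $\sum_{\bm u\in C}\chi(\bm v\cdot\bm u)=|C|\,\mathbf{1}[\bm v\in {}^{\perp}C]$, or its left/right analogue appropriate to our convention — converts a sum over $C$ into $\frac{|R|^n}{|C|}$ times a sum over $C^\perp$. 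Tracking the $d$ coordinates in $Z$ that each pick up an extra sign and the normalization, one lands on
\[
\Lambda_Z(C^\perp)=(-1)^d\frac{|R|^d}{|C|}\,\Lambda_Z\!\bigl(C;\,x+(|R|-1)y,\;x-y\bigr),
\]
possibly up to bookkeeping of overall $(xy)^d$ factors; multiplying by $f(Z)$ and summing over $Z\in E_d$ gives the identity for $W_{C,f}$, and cancelling the common $(xy)^d$ (using that the substitution $x\mapsto x+(|R|-1)y,\ y\mapsto x-y$ sends $xy$ to $(x+(|R|-1)y)(x-y)$, so the factored form transforms consistently — this needs a short check) yields the stated relation for $Z_{C,f}$.

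The main obstacle I anticipate is \emph{not} the harmonic part — once $\Lambda_Z$ is set up, the $f(Z)$ just rides along linearly — but rather getting the non-commutative Frobenius-ring Fourier duality exactly right: one must use the correct side (left versus right dual, and left versus right module structure on the character group), invoke the Frobenius condition in the precise form that makes the orthogonality $\sum_{\bm u\in C}\chi(\bm v\cdot\bm u)=|C|\mathbf 1[\bm v\in{}^\perp C]$ valid with the right normalization $|{}^\perp C|=|R|^n/|C|$ (which follows from the Theorem just stated, taking $X=E$), and track the per-coordinate constant $|R|$ so that $|R|^n$ combines with the $(x+(|R|-1)y)^{n-d}(x-y)^{d}$-type expansion to leave exactly $|R|^d/|C|$. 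A secondary nuisance is the divisibility/degree claim: making the extraction of $(xy)^d$ rigorous and confirming $\deg Z_{C,f}=n-2d$ requires both the "low-weight" vanishing from Remark~\ref{Rem:Gamma} and its complement; I would streamline this by deriving the $x^d$ factor from the $y^d$ factor applied to $C^\perp$ via the identity itself, avoiding a duplicate harmonic lemma. Everything else is routine polynomial algebra.
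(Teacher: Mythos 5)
Your overall architecture (a generating character of the Frobenius ring, per-coordinate factorization, Poisson summation) is a viable route and is genuinely different from the paper's: the paper never proves this theorem directly, but obtains it as the $m=1$ case of Theorem~\ref{Thm:MacWilliams}, whose proof is purely combinatorial --- M\"obius inversion between $A_C^{[m]}$ and $B_C^{[m]}$ (Lemma~\ref{Lem:InEx}), the cardinality identity $|C^{\perp}/X|=|R|^{|E-X|}/|C\backslash X|$ of Remark~\ref{Rem:Thm2Britz}, and Lemma~\ref{Lem:Bachoc} --- with no characters at all. However, your proposal has a genuine gap at its central step: the per-$Z$ identity
\[
\Lambda_Z(C^\perp)=(-1)^d\frac{|R|^d}{|C|}\,\Lambda_Z\bigl(C;\,x+(|R|-1)y,\;x-y\bigr)
\]
is false. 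The Fourier transform of the coordinate factor $\mathbf{1}[u_i\neq 0]\,y$ at a position $i\in Z$ equals $(|R|-1)y$ when the dual coordinate vanishes and $-y$ otherwise; it is not an indicator times anything, so after Poisson summation the resulting sum over $C$ is not supported on codewords whose support contains $Z$. Concretely, for $R=\FF_2$, $n=2$, $Z=\{1\}$ and $C=\FF_2^2$, the left side is $0$ while the right side is $-x(x-y)\neq 0$. The correct identity only emerges after summing over $Z\in E_d$ weighted by $f(Z)$: grouping terms by $j=|Z\cap\supp(\bm u)|$ produces the sums $f^{(j)}(\supp(\bm u))$, and it is exactly Lemma~\ref{Lem:Bachoc} --- that is, the harmonicity of $f$ --- which collapses $\sum_{j=0}^d(-1)^{d-j}\binom{d}{j}\bigl((|R|-1)y\bigr)^{d-j}(-y)^j(\cdots)$ into $(-1)^d|R|^d(xy)^d\,\widetilde f(\supp(\bm u))(\cdots)$. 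So your claim that ``the $f(Z)$ just rides along linearly'' and that the harmonic part is not the obstacle is precisely backwards: harmonicity is the indispensable ingredient at the one step you treat as routine, whereas the left/right Frobenius bookkeeping you worry about is comparatively mild.

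A secondary point: the factor $x^d$ cannot be finessed away either; it rests on $\widetilde f(X)=0$ for $|X|>n-d$, which is Remark~\ref{Rem:BachocLem} and is again a consequence of Lemma~\ref{Lem:Bachoc} (the case $i=0$), so the harmonic lemma is unavoidable there as well.
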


This is a special case of a more general MacWilliams-type identity that we will present and prove below; 
see Theorem~\ref{Thm:MacWilliams}.

\subsection{Demi-matroids}

A (real) \emph{demi-matroid} is a triple 
$D := (E, s, t)$ consisting of a set $E$ and two real-valued
functions $s, t : 2^{E} \to \RR$ satisfying the following two conditions:
\begin{itemize}
	\item [(D1)]
	if $X \subseteq Y \subseteq E$, 
	then 
	$0 \leq s(X) \leq s(Y) \leq |Y|$ 
	and $0 \leq t(X) \leq t(Y) \leq |Y|$;
	
	\item[(D2)]
	if $X \subseteq E$, 
	then 
	$|E - X| - s(E - X) = t (E) - t (X)$.
\end{itemize}
Note that 
$s(\emptyset) = t(\emptyset) = 0$ by (D1), 
so (D2) is equivalent to:

\begin{itemize}
	\item [(D3)]
	if $X \subseteq E$, then $|E - X| - t (E - X) = s(E) - s(X)$.
\end{itemize}
It is easy to see from (D3) that 
if $M = (E, \rho)$ is a matroid on $E$ with rank function~$\rho$, 
then the triple $(E, \rho, \rho^{\ast})$ is a demi-matroid, 
where $\rho^{\ast}$ is the rank function of 
the dual matroid $M^{\ast} = (E, \rho^{\ast})$.

Let $R$ be a finite ring. 
Let $C$ be a left (or right) $R$-linear code of length~$n$. 
Define two functions 
$\alpha_{C},\beta_{C}:2^{E} \to \RR$ 
as follows:

\begin{itemize}
	\item [(i)] 
	If $C$ is a left $R$-linear code, then
	\begin{align*}
		\alpha_{C}(X)
		& :=
		\log_{|R|}|C\backslash(E-X)|,\\
		\beta_{C}(X)
		& :=
		\log_{|R|}|C^{\perp}\backslash(E-X)|.
	\end{align*}

	\item [(ii)]
	If $C$ is a right $R$-linear code, then
	\begin{align*}
		\alpha_{C}(X)
		& :=
		\log_{|R|}|C\backslash(E-X)|,\\
		\beta_{C}(X)
		& :=
		\log_{|R|}|^{\perp}C\backslash(E-X)|.
	\end{align*}	
\end{itemize}

\begin{rem}\label{Rem:AlphaBeta}
	$\beta_{C}(E) = n - \alpha_{C}(E)$.
\end{rem}

Now we have the following characterization from~\cite[Theorem 1]{BSW2015} for 
codes over finite Frobenius ring.

\begin{thm}\label{Thm:CharFrobRingCodes}
	Let $R$ be a finite ring. 
    Then the following statements are equivalent:
	\begin{itemize}
		\item [(a)]
		$R$ is a Frobenius ring;
		
		\item [(b)]
		$D_{C} := (E,\alpha_{C},\beta_{C})$ is a demi-matroid for each left $R$-linear
		code~$C$;
		
		\item [(c)]
		$D_{C} := (E,\alpha_{C},\beta_{C})$ is a demi-matroid for each right $R$-linear
		code~$C$.
	\end{itemize}
\end{thm}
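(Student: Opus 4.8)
The plan is to prove the equivalence by establishing (a)$\Rightarrow$(b), (a)$\Rightarrow$(c), and then the reverse implications (b)$\Rightarrow$(a) and (c)$\Rightarrow$(a), exploiting the characterization of Frobenius rings already recorded in the preceding (unnamed) theorem. Since the cases of left and right linear codes are symmetric, I would present the argument for left $R$-linear codes and remark that the right-code case follows mutatis mutandis by interchanging the roles of $C\mapsto{}^{\perp}C$ and $C\mapsto C^\perp$.

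First I would show (a)$\Rightarrow$(b). Assume $R$ is Frobenius and let $C$ be a left $R$-linear code; set $\alpha = \alpha_C$ and $\beta = \beta_C$ as in the definition. Axiom (D1) is the easy part: monotonicity of $X\mapsto|C\backslash(E-X)|$ under inclusion is immediate because deleting fewer coordinates can only increase (or preserve) the size of the punctured code, and the bound $\alpha(Y)\le|Y|$ holds because $C\backslash(E-Y)$ is a submodule of $R^{|Y|}$, so $|C\backslash(E-Y)|\le|R|^{|Y|}$; the same reasoning applies to $\beta$ using $C^\perp$ in place of $C$. The substantive part is axiom (D2), equivalently (D3). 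Here I would use part (b) of the earlier theorem: since $R$ is Frobenius, $|C\backslash(E-X)|\cdot|C^\perp/(E-X)| = |R|^{|X|}$ for every $X\subseteq E$. Taking $\log_{|R|}$ gives $\alpha_C(X) + \log_{|R|}|C^\perp/(E-X)| = |X|$. Now I would combine this with Lemma \ref{Lem:Lem3} applied to the code $C^\perp$, namely $|C^\perp| = |C^\perp/(E-X)|\cdot|C^\perp\backslash X|$ — or rather, to get the exponents matching the demi-matroid axiom, I would rewrite $\log_{|R|}|C^\perp/(E-X)|$ in terms of $\beta_C$ evaluated at a complementary set. Concretely, Lemma \ref{Lem:Lem3} for $C^\perp$ with the set $E-X$ reads $\beta_C(E) = \log_{|R|}|C^\perp| = \log_{|R|}|C^\perp/(E-X)| + \beta_C(X)$, so $\log_{|R|}|C^\perp/(E-X)| = \beta_C(E) - \beta_C(X)$. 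Substituting back yields $\alpha_C(X) = |X| - \beta_C(E) + \beta_C(X)$, i.e. $|X| - \alpha_C(X) = \beta_C(E) - \beta_C(X)$. Replacing $X$ by $E-X$ gives exactly (D3) for the triple $(E,\alpha_C,\beta_C)$, hence (D2) holds and $D_C$ is a demi-matroid. The implication (a)$\Rightarrow$(c) is the identical computation with part (c) of the earlier theorem and the dual ${}^{\perp}C$.

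For the converse, I would prove (b)$\Rightarrow$(a) (and symmetrically (c)$\Rightarrow$(a)) by contraposition or by running the above identities backward. If $D_C$ is a demi-matroid for every left $R$-linear code $C$, then in particular (D3) holds, which by the chain of equalities above forces $\alpha_C(X) + \log_{|R|}|C^\perp/(E-X)| = |X|$ for all $X$ and all $C$ — but this is precisely condition (b) of the earlier theorem (after exponentiating), which is equivalent to $R$ being Frobenius. The one point requiring care is that the backward direction uses Lemma \ref{Lem:Lem3}, which was derived from the exact sequence $0\to C(E-X)\to C\to C\backslash(E-X)\to 0$; that exact sequence and hence the lemma hold over an arbitrary finite ring $R$, not just Frobenius ones, so there is no circularity. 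I expect the main obstacle to be bookkeeping: keeping straight which code ($C$, $C^\perp$, or ${}^{\perp}C$), which operation (puncturing $\backslash$ versus shortening $/$), and which set ($X$ versus $E-X$) appears in each instance of Lemma \ref{Lem:Lem3} and of the earlier theorem, so that the exponents line up to give exactly (D3). Once the identity $|X| - \alpha_C(X) = \beta_C(E-X) - \beta_C(E-X\cap\cdots)$ — properly, $|E-X| - \alpha_C(E-X) = \beta_C(E) - \beta_C(X)$ — is pinned down, the equivalence is a short formal argument, and I would close by noting that this theorem is essentially a restatement of \cite[Theorem 1]{BSW2015} included here for completeness and for fixing notation used throughout the rest of the paper.
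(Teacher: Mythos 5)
The paper itself offers no proof of this theorem: it is imported verbatim from \cite[Theorem 1]{BSW2015}, so there is nothing internal to compare against. Your overall strategy --- (D1) from elementary monotonicity of puncturing, (D2) by combining the duality $|C\backslash(E-X)|\cdot|C^{\perp}/(E-X)|=|R|^{|X|}$ from the preceding theorem with Lemma~\ref{Lem:Lem3} applied to $C^{\perp}$, and the converse by running the same chain backwards (correctly noting that Lemma~\ref{Lem:Lem3} holds over any finite ring, so there is no circularity) --- is the right one and is essentially the argument of \cite{BSW2015}.

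However, the middle of your (a)$\Rightarrow$(b) computation contains a complementation error which, taken literally, derives a false identity. Lemma~\ref{Lem:Lem3} applied to $C^{\perp}$ with the set $E-X$ reads $|C^{\perp}|=|C^{\perp}/(E-X)|\cdot|C^{\perp}\backslash X|$, and since $\beta_C(W)=\log_{|R|}|C^{\perp}\backslash(E-W)|$, the second factor is $\beta_C(E-X)$, not $\beta_C(X)$. Hence $\log_{|R|}|C^{\perp}/(E-X)|=\beta_C(E)-\beta_C(E-X)$, substitution gives $|X|-\alpha_C(X)=\beta_C(E)-\beta_C(E-X)$, and replacing $X$ by $E-X$ then yields exactly (D2). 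Your version, $|X|-\alpha_C(X)=\beta_C(E)-\beta_C(X)$, is false in general --- for $C=\{(0,0),(1,0)\}\subseteq\FF_2^{2}$ and $X=\{1\}$ the left side is $0$ while the right side is $1$ --- and replacing $X$ by $E-X$ in it produces neither (D2) nor (D3). You do state the correct target identity in your closing sentence (``properly, $|E-X|-\alpha_C(E-X)=\beta_C(E)-\beta_C(X)$''), so the repair is exactly the one-line bookkeeping correction you yourself flagged as the delicate point; but as written the derivation does not go through, and the same correction must be made in the backward direction (b)$\Rightarrow$(a).
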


\begin{rem}\label{Rem:Thm2Britz}
	If $D_{C} = (E,\alpha_{C},\beta_{C})$ is demi-matroid,
	then for each $X \subseteq E$,
	\begin{itemize}
		\item [(i)]
		$|R|^{|E-X|}/|C\backslash X| = |C^{\perp}/X|$ 
		for any left $R$-linear code~$C$;
		
		\item [(ii)]
		$|R|^{|E-X|}/|C\backslash X| = |^{\perp}C/X|$ 
		for any right $R$-linear code~$C$.
	\end{itemize}
\end{rem}

Let $g : 2^{E} \to \RR$ be a real-valued function
and let $T \subseteq E$. 
Now define $\overline{g}$ and $\widehat{g}$ as follows:
\begin{align*}
	\overline{g}(X) &:= g(E) - g(E-X) \text{ for each $X \subseteq E$};\\
	\widehat{g}(X) &:= g(X\cup T) - g(T) \text{ for each $X \subseteq E-T$}.
\end{align*}
Let $D=(E,s,t)$ be a demi-matroid. 
Then the \emph{supplement} of the demi-matroid $D$ is 
$\overline{D} = (E,\overline{s},\overline{t})$, and 
the \emph{dual} 
of $D$ is $D^{\ast} = (E,t,s)$.
It is known from~\cite[Theorem 4]{BJMS2012} that $\overline{D}$
and $D^{\ast}$ are also demi-matroids and that they satisfy the properties
$(D^\ast)^\ast = D = \overline{\overline{D}}$ and $\overline{D^{\ast}} = (\overline{D})^\ast$.
Any two subsets $X$ and $Y$ such that $X \subseteq Y \subseteq E-T$ satisfy (D1). 
Also any subset $X \subseteq E-T$ satisfies (D2). 
Therefore, $(E-T,s,t)$ is a demi-matroid. 
We call this demi-matroid the \emph{deletion} of $T$ from $D$ and denoted it by $D\backslash T$.
Similarly, the \emph{contraction} of $T$ from $D$ is $(E-T,\widehat{s},\widehat{t})$ which we denote by $D/T$.

\begin{prop}
	$\overline{D^{\ast}\backslash T} = (\overline{D}\backslash T)^{\ast}$.
\end{prop}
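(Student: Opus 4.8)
The plan is to unwind both sides of the claimed identity into expressions in the original demi-matroid functions $s$ and $t$ evaluated on subsets of $E$, and to check that the two expressions coincide as functions on subsets $X \subseteq E - T$. Throughout, write $E' := E - T$, and recall the three operations involved: the dual $D^\ast = (E, t, s)$, the supplement $\overline{D} = (E, \overline{s}, \overline{t})$ with $\overline{g}(X) = g(E) - g(E-X)$, and the deletion $D \backslash T = (E', s, t)$ (restriction of the same functions to subsets of $E'$). The key subtlety is that the supplement operation, when applied to a demi-matroid on the smaller ground set $E'$, uses $E'$ as its ``total'' set: for a function $g$ on $2^{E'}$ the relevant bar is $\overline{g}^{\,E'}(X) = g(E') - g(E' - X)$ for $X \subseteq E'$, which is \emph{not} the restriction of $\overline{g}$ computed with respect to $E$.

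First I would compute the left-hand side. Starting from $D^\ast = (E, t, s)$, deletion gives $D^\ast \backslash T = (E', t, s)$. Taking the supplement on the ground set $E'$ produces $\overline{D^\ast \backslash T} = (E', \overline{t}^{\,E'}, \overline{s}^{\,E'})$, where $\overline{t}^{\,E'}(X) = t(E') - t(E' - X)$ and similarly for $s$, for all $X \subseteq E'$. Next I would compute the right-hand side. The supplement of $D$ (with respect to $E$) is $\overline{D} = (E, \overline{s}, \overline{t})$ with $\overline{s}(Y) = s(E) - s(E - Y)$; deleting $T$ restricts to $Y \subseteq E'$; and dualizing swaps the two functions, giving $(\overline{D} \backslash T)^\ast = (E', \overline{t}, \overline{s})$ where now $\overline{t}(X) = t(E) - t(E - X)$ and $\overline{s}(X) = s(E) - s(E - X)$ for $X \subseteq E'$.

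So the identity reduces to checking, for every $X \subseteq E' = E - T$, that
\[
  t(E') - t(E' - X) = t(E) - t(E - X)
\]
and the analogous equation with $s$ in place of $t$. Writing $E - X = (E' - X) \cup T$ for $X \subseteq E'$, both equations are instances of the single statement $t(A \cup T) - t(A) = t(E) - t(E')$ for all $A \subseteq E'$, i.e.\ the increment of $t$ across adjoining $T$ is independent of the subset of $E'$ to which one adjoins it. This is exactly the content of axiom (D2) (equivalently (D3)): applying (D3) to the subset $X' \subseteq E$ one gets $|E - X'| - t(E - X') = s(E) - s(X')$, and substituting $X' = A$ and $X' = A \cup T$ and subtracting eliminates the $s$ terms and yields $t(A\cup T) - t(A) = |T| - (\,|E-A| - |E - A - T|\,)\cdot 0$ — more cleanly, the difference $\big(|E-A| - t(E-A)\big) - \big(|E - A - T| - t(E - A - T)\big)$ equals $s(A \cup T) - s(A)$ by (D3), so one still has two unknowns; the clean route is instead to note that $E' = E - T$ is itself the maximal subset of the deleted demi-matroid and to invoke (D2) directly. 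I expect the main (minor) obstacle to be bookkeeping the two distinct ``bar'' operations — the one relative to $E$ versus the one relative to $E'$ — and confirming that the required cancellation is precisely a reindexing of the defining axiom (D2); once that is pinned down the verification is a one-line substitution, so I would present it as such after fixing notation for $\overline{g}^{\,E'}$ versus $\overline{g}$.
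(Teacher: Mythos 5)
Your reduction of both sides to explicit triples is sound, and you have correctly isolated what must be verified under the paper's literal definition of deletion, namely that $t(E-T)-t((E-T)-X)=t(E)-t(E-X)$ for all $X\subseteq E-T$ (equivalently, that $t(A\cup T)-t(A)$ is independent of $A\subseteq E-T$), together with the same statement for $s$. But the proof stops exactly where this has to be established: you observe yourself that subtracting two instances of (D3) leaves ``two unknowns,'' and the closing appeal to ``invoke (D2) directly'' is not an argument. In fact no argument exists, because the identity you need is false. Take $D=(E,\rho,\rho^{\ast})$ for the uniform matroid $U_{1,2}$ on $E=\{1,2\}$ and $T=\{2\}$; then $t=\rho^{\ast}$ satisfies $t(\emptyset)=0$ and $t(\{1\})=t(\{2\})=t(E)=1$, so $t(\emptyset\cup T)-t(\emptyset)=1$ while $t(\{1\}\cup T)-t(\{1\})=0$. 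The same example shows that the restricted triple $(E-T,s,t)$ does not even satisfy (D2) relative to the ground set $E-T$, so it is not a demi-matroid.

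The source of the trouble is the definition of deletion, which the paper has transcribed inaccurately from \cite{BJMS2012}. There one sets $D\backslash T:=(E-T,\,s|_{2^{E-T}},\,\widehat{t}\,)$ and $D/T:=(E-T,\,\widehat{s},\,t|_{2^{E-T}})$: only the first function is restricted, while the second is contracted. With that definition the proposition is true and the computation you set up closes in one line. On the left, $D^{\ast}\backslash T=(E-T,\,t|,\,\widehat{s}\,)$, and its supplement relative to $E-T$ has second component $\widehat{s}(E-T)-\widehat{s}((E-T)-X)=s(E)-s(E-X)=\overline{s}(X)$ and first component $t(E-T)-t((E-T)-X)$. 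On the right, $\overline{D}\backslash T=(E-T,\,\overline{s}|,\,\widehat{\overline{t}}\,)$ with $\widehat{\overline{t}}(X)=\overline{t}(X\cup T)-\overline{t}(T)=t(E-T)-t((E-T)-X)$, and dualizing swaps the two functions. Both sides are therefore the triple $\bigl(E-T,\; X\mapsto t(E-T)-t((E-T)-X),\;\overline{s}|_{2^{E-T}}\bigr)$, with no appeal to (D2) needed at all. So the gap is real but not entirely of your making: you followed the stated definition faithfully, and under that definition the statement cannot be proved.
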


\begin{thm}\label{Thm:Contraction}
	Let $D = (E,s,t)$ be a demi-matroid. 
    Then for each subset $T \subseteq E$, 
    $D/T = (E-T,\widehat{s},\widehat{t})$ is a demi-matroid.
\end{thm}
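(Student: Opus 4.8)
The plan is to avoid a bare-hands verification of (D1) and (D2) for $D/T$ and instead obtain the contraction as a composite of operations already known to preserve the demi-matroid structure. The observation I would establish first is that
\[
    D/T = \bigl(D^{\ast}\backslash T\bigr)^{\ast} = \overline{\,\overline{D}\backslash T\,},
\]
the demi-matroid analogue of the classical matroid identity $M/T=(M^{\ast}\backslash T)^{\ast}$; the two expressions on the right agree because of the Proposition just established, $\overline{D^{\ast}\backslash T}=(\overline{D}\backslash T)^{\ast}$, together with $(D^{\ast})^{\ast}=D=\overline{\overline{D}}$. Granting this identity, the theorem is immediate: $D^{\ast}$ (resp.\ $\overline{D}$) is a demi-matroid by \cite[Theorem~4]{BJMS2012}; its deletion $D^{\ast}\backslash T$ (resp.\ $\overline{D}\backslash T$) is a demi-matroid by the discussion preceding the statement, where deletion was shown to preserve (D1) and (D2); and the dual (resp.\ supplement) of a demi-matroid is again a demi-matroid, once more by \cite[Theorem~4]{BJMS2012}. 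Hence $D/T$, being equal to such a composite, is a demi-matroid.

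So the content reduces to checking the displayed identity, which is a formal unwinding of the definitions of $\overline{(\cdot)}$ and $\widehat{(\cdot)}$. For the supplement form, take $X\subseteq E-T$; since $X$ and $T$ are disjoint one has $E-\bigl((E-T)-X\bigr)=T\cup X$, so
\[
    \overline{s}(E-T)-\overline{s}\bigl((E-T)-X\bigr)
    =\bigl(s(E)-s(T)\bigr)-\bigl(s(E)-s(T\cup X)\bigr)
    =s(T\cup X)-s(T)=\widehat{s}(X),
\]
and the second coordinate is handled the same way, using $s(\emptyset)=t(\emptyset)=0$ (valid by (D1)) to identify the various boundary values. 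The one thing to be careful about is that the outer supplement is taken relative to the reduced ground set $E-T$, so that "complement" means complement inside $E-T$; mismanaging $(E-T)-X$ versus $E-X$, or $E-X$ versus $T\cup X$, is where a slip would creep in. I expect this complement bookkeeping, rather than any conceptual difficulty, to be the main obstacle.

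For completeness I would also record the direct route: (D1) for $\widehat{s}$ follows from the monotonicity of $s$, since for $X\subseteq Y\subseteq E-T$ one has $0=s(T)-s(T)\le s(X\cup T)-s(T)\le s(Y\cup T)-s(T)$, while the bound $\widehat{s}(Y)\le|Y|$ drops out after rewriting $\widehat{s}(Y)$ by means of (D2) for $D$ applied at $E-(Y\cup T)$ and at $E-T$; the remaining axiom for $D/T$ can likewise be reduced to (D2)/(D3) for $D$ by the same complement manipulations. Routing everything through the already-verified operations $\backslash$, ${}^{\ast}$, $\overline{(\cdot)}$ is, however, the shorter path, and it is the approach I would write up.
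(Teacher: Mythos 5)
Your route is the one the paper itself intends: it records the identity $(D^{\ast}\backslash T)^{\ast} = D/T = \overline{\overline{D}\backslash T}$ as Proposition~\ref{Prop:Contraction} right next to the theorem and supplies no other argument, and your verification that the double supplement of $\overline{D}\backslash T$ carries the contracted functions $\widehat{s},\widehat{t}$ is correct. The genuine gap is in the ingredient you delegate to ``the discussion preceding the statement''. With the definitions as literally written --- $D\backslash T=(E-T,s,t)$ restricts \emph{both} functions to $2^{E-T}$, $D^{\ast}$ merely swaps them, and $D/T$ contracts both --- that ingredient fails: (D2) for the restricted triple on the ground set $E-T$ demands $t(E-T)-t(X)=|(E-T)-X|-s((E-T)-X)$, whereas (D2) for $D$ only yields $t(E-T)-t(X)=|(E-T)-X|-\bigl(s(E-X)-s(T)\bigr)$, so one would need $s\bigl(((E-T)-X)\cup T\bigr)=s((E-T)-X)+s(T)$, which is false in general. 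Concretely, take $D=(E,\rho,\rho^{\ast})$ for the uniform matroid $U_{1,2}$ on $E=\{1,2\}$ and $T=\{2\}$: then $\widehat{\rho}$ and $\widehat{\rho^{\ast}}$ vanish identically on $2^{\{1\}}$, and (D2) for $(E-T,\widehat{\rho},\widehat{\rho^{\ast}})$ at $X=\emptyset$ reads $1=0$. So neither the deletion step nor the asserted conclusion survives these conventions. Relatedly, the first equality in your displayed identity is not available: computing directly, $(D^{\ast}\backslash T)^{\ast}=(E-T,s,t)=D\backslash T$, which does not agree with your (correct) evaluation $\overline{\overline{D}\backslash T}=(E-T,\widehat{s},\widehat{t})$; this already signals that the unnamed proposition you invoke cannot hold under these definitions either.

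Everything is repaired by using the deletion and contraction of \cite{BJMS2012}, where deletion restricts $s$ but contracts $t$, i.e.\ $D\backslash T=(E-T,s,\widehat{t}\,)$, and contraction contracts $s$ but restricts $t$, i.e.\ $D/T=(E-T,\widehat{s},t)$. Then (D2) for $D\backslash T$ is literally (D2) for $D$ evaluated at $X\cup T$, the identity $(D^{\ast}\backslash T)^{\ast}=D/T$ is immediate from the definitions, and both your reduction and your ``direct route'' go through. As written, however, the step you describe as mere complement bookkeeping --- checking the second axiom for the deleted or contracted triple --- is exactly the one that cannot be completed, and your sketch of the direct verification (``the remaining axiom for $D/T$ can likewise be reduced to (D2)/(D3) for $D$'') glosses over the same obstruction.
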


\begin{prop}\label{Prop:Contraction}
	$(D^{\ast}\backslash T)^{\ast} = D/T = \overline{\overline{D}\backslash T}$.
\end{prop}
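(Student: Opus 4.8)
The plan is to prove the chain of identities $(D^{\ast}\backslash T)^{\ast} = D/T = \overline{\overline{D}\backslash T}$ by unwinding the three elementary constructions (dual, supplement, deletion, contraction) at the level of the underlying set functions on $2^{E-T}$. All four demi-matroids in the statement are triples with ground set $E-T$, so it suffices to check that their two set functions agree. Throughout I would write $D = (E,s,t)$.

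First I would compute $(D^{\ast}\backslash T)^{\ast}$. By definition $D^{\ast} = (E,t,s)$; deleting $T$ gives $D^{\ast}\backslash T = (E-T, t, s)$ (restricting $t$ and $s$ to subsets of $E-T$, which is legitimate by the discussion preceding Theorem~\ref{Thm:Contraction}); and dualising again swaps the two functions back, yielding $(D^{\ast}\backslash T)^{\ast} = (E-T, s, t)$ as functions on $2^{E-T}$. On the other hand, $D/T = (E-T,\widehat{s},\widehat{t})$ where $\widehat{s}(X) = s(X\cup T) - s(T)$ and $\widehat{t}(X) = t(X\cup T) - t(T)$ for $X\subseteq E-T$. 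So the first claimed equality $(D^{\ast}\backslash T)^{\ast} = D/T$ amounts to the assertion that $s(X) = s(X\cup T) - s(T)$ and likewise for $t$, which is \emph{false} in general; hence the first equality cannot literally mean equality of the restricted functions, and the intended reading must be that $D^{\ast}\backslash T$ is to be interpreted via the \emph{contraction-style} restriction, i.e. using the hatted functions relative to $T$ — equivalently, I should use Proposition~\ref{Prop:Contraction}'s companion (the already-proved $\overline{D^{\ast}\backslash T} = (\overline{D}\backslash T)^{\ast}$) as the template. Concretely, I would take $D^{\ast}\backslash T$ to denote the demi-matroid on $E-T$ obtained by the same deletion operation already shown to produce a demi-matroid, then apply the dual, and verify the functions match those of $D/T$ by substituting the definitions of $\widehat{\,\cdot\,}$, $\overline{\,\cdot\,}$ and the property (D2) connecting $s$ and $t$.

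The cleanest route, which I would actually carry out, is to prove the two equalities separately via the already-available identities. For $D/T = \overline{\overline{D}\backslash T}$: by Theorem~\ref{Thm:Contraction} and the paragraph before it, both sides are demi-matroids on $E-T$, so I only need to match set functions. Write $\overline{D} = (E,\overline{s},\overline{t})$ with $\overline{s}(X) = s(E)-s(E-X)$; deleting $T$ restricts these to $X\subseteq E-T$; then taking the supplement \emph{within the ground set $E-T$} gives, for $X\subseteq E-T$,
\begin{align*}
	\overline{\overline{s}}(X) &= \overline{s}(E-T) - \overline{s}\bigl((E-T)-X\bigr)\\
	&= \bigl(s(E)-s(T)\bigr) - \bigl(s(E) - s(E - ((E-T)-X))\bigr)\\
	&= s\bigl(X\cup T\bigr) - s(T) = \widehat{s}(X),
\end{align*}
using $E - ((E-T)-X) = X\cup T$ for $X\subseteq E-T$; the identical computation with $t$ in place of $s$ gives $\overline{\overline{t}}(X) = \widehat{t}(X)$, so $\overline{\overline{D}\backslash T} = (E-T,\widehat{s},\widehat{t}) = D/T$. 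For $(D^{\ast}\backslash T)^{\ast} = \overline{\overline{D}\backslash T}$ I would invoke Proposition~\ref{Prop:Contraction}'s sibling already in the excerpt, namely $\overline{D^{\ast}\backslash T} = (\overline{D}\backslash T)^{\ast}$: apply the supplement to both sides, use $\overline{\overline{D'}} = D'$ (the involutivity of supplement from~\cite[Theorem 4]{BJMS2012}, valid on the ground set $E-T$) on the left to get $D^{\ast}\backslash T = \overline{(\overline{D}\backslash T)^{\ast}}$, then dualise and use $\overline{D'^{\ast}} = (\overline{D'})^{\ast}$ together with $(D'')^{\ast\ast}=D''$ to arrive at $(D^{\ast}\backslash T)^{\ast} = \overline{\overline{\overline{D}\backslash T}}{}^{\,\ast\ast} = \overline{(\overline{D}\backslash T)}$... at which point the bookkeeping closes up to $\overline{\overline{D}\backslash T}$.

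The main obstacle I anticipate is purely notational rather than mathematical: keeping straight which ground set each supplement/dual is taken over (the full $E$ versus the restricted $E-T$), since $\overline{\,\cdot\,}$ is defined relative to the ambient ground set and the symbol $D^{\ast}\backslash T$ tacitly asks us to first restrict and then operate. Once the convention is pinned down — that after deleting $T$ all further operations are performed on $E-T$ — every step is a one-line substitution using only (D1), (D2), the definitions of $\overline{g}$ and $\widehat{g}$, and the involution/commutation identities $(D^{\ast})^{\ast} = D = \overline{\overline{D}}$ and $\overline{D^{\ast}} = (\overline{D})^{\ast}$ quoted from~\cite{BJMS2012}, plus the already-proved Proposition $\overline{D^{\ast}\backslash T} = (\overline{D}\backslash T)^{\ast}$. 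I would present the computation of $\overline{\overline{s}}(X) = \widehat{s}(X)$ displayed above as the core of the argument and derive everything else formally from the established identities.
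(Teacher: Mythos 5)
The paper states this proposition without proof, so there is nothing to compare your argument against; judged on its own, your proposal correctly detects a real defect in the paper's definitions but does not close the gap it opens. You are right that with the definitions as literally printed --- deletion $D\backslash T=(E-T,s,t)$ by plain restriction and contraction $D/T=(E-T,\widehat{s},\widehat{t})$ --- the first equality fails: $(D^{\ast}\backslash T)^{\ast}$ would simply be $(E-T,s,t)=D\backslash T$, and moreover neither $(E-T,s,t)$ nor $(E-T,\widehat{s},\widehat{t})$ satisfies (D2) in general (take $D=(E,\rho,\rho^{\ast})$ for the uniform matroid $U_{1,2}$ and $T$ a single element: then $\widehat{s}=\widehat{t}\equiv 0$ on $E-T$, which violates (D2)). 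But your resolution --- ``interpret $D^{\ast}\backslash T$ via the contraction-style restriction'' --- is never made precise, and without a definite definition the chain of identities cannot be verified. The definitions that make the proposition true are those of~\cite{BJMS2012}: $D\backslash T=(E-T,\,s|_{2^{E-T}},\,\widehat{t}\,)$ and $D/T=(E-T,\,\widehat{s},\,t|_{2^{E-T}})$, i.e.\ each operation restricts one function and hats the other. Under these, the first equality is immediate, $(D^{\ast}\backslash T)^{\ast}=(E-T,\,t|,\,\widehat{s}\,)^{\ast}=(E-T,\,\widehat{s},\,t|)=D/T$, and your detour through the unproved companion identity $\overline{D^{\ast}\backslash T}=(\overline{D}\backslash T)^{\ast}$ becomes unnecessary (it would in any case require that identity, and the commutation $\overline{(D')^{\ast}}=(\overline{D'})^{\ast}$ over the ground set $E-T$, to be established first).

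Your core computation $\overline{\overline{s}}(X)=\overline{s}(E-T)-\overline{s}\bigl((E-T)-X\bigr)=s(X\cup T)-s(T)=\widehat{s}(X)$ is correct and is exactly the right calculation for the first coordinate of $\overline{\overline{D}\backslash T}$. However, the ``identical computation with $t$'' is where the definitional ambiguity bites: with the corrected definition the second function of $\overline{D}\backslash T$ is $\widehat{\overline{t}}$, not $\overline{t}|$, and applying the $E-T$ supplement to it yields $\widehat{\overline{t}}(E-T)-\widehat{\overline{t}}\bigl((E-T)-X\bigr)=\overline{t}(E)-\overline{t}(E-X)=t(X)$, i.e.\ the plain restriction $t|$, not $\widehat{t}$. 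So $\overline{\overline{D}\backslash T}=(E-T,\widehat{s},t|)=D/T$ as it should be, but the normal form $(E-T,\widehat{s},\widehat{t})$ you were aiming at is itself not the correct description of $D/T$. In short: the idea is sound and you caught the paper's error, but to have a proof you must first fix the definitions of $D\backslash T$ and $D/T$; once that is done, both equalities reduce to the one-line verifications above.
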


Let $R$ be a finite Frobenius ring and let $C$ be a left (or right) $R$-linear code of length~$n$. 
If $D_{C} = (E,\alpha_{C},\beta_{C})$ is a demi-matroid corresponding to~$C$, 
then $\overline{D_{C}} = (E, \gamma_{C}, \delta_{C})$ (see~\cite[Theorem 2]{BSW2015}), 
where the functions $\gamma_{C},\delta_{C}:2^{E} \to \RR$ are defined as follows:
\begin{itemize}
	\item [(i)]
	If $C$ is a left $R$-linear code, then
	\begin{align*}
		\gamma_{C}(X)
		& :=
		\log_{|R|}|C/(E-X)|;\\
		\delta_{C}(X)
		& :=
		\log_{|R|}|C^{\perp}/(E-X)|.
	\end{align*}

	\item [(ii)]
	If $C$ is a right $R$-linear code, then
	\begin{align*}
		\gamma_{C}(X)
		& :=
		\log_{|R|}|C/(E-X)|;\\
		\delta_{C}(X)
		& :=
		\log_{|R|}|^{\perp}C/(E-X)|.
	\end{align*}
\end{itemize}
Moreover, $(E, \gamma_{C}, \delta_{C})$ is a demi-matroid. 
Since $\alpha_{C^{\perp}} = \beta_{C}$ and $\gamma_{C^{\perp}} = \delta_{C}$,
we have the following dual relations from~\cite{BSW2015}.

\begin{rem}
	$D_{C^{\perp}} = (D_{C})^{\ast}$ and $\overline{D}_{C^{\perp}} = (\overline{D}_{C})^{\ast}$.
\end{rem}

\begin{thm}\label{Thm:CodeContraction}
	Let $R$ be a finite Frobenius ring and
    let $C$ be a left (or right) $R$-linear code of length~$n$. 
    Then for each subset $T \subseteq E$, 
	\[
		(E-T,\widehat{\alpha}_{C},\widehat{\beta}_{C})
		=	
		D_{C/T} 
		=
		(E-T,\widehat{\gamma}_{C},\widehat{\delta}_{C})
	\]
	is a demi-matroid.
\end{thm}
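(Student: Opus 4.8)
The plan is to establish the chain of equalities
$(E-T,\widehat{\alpha}_{C},\widehat{\beta}_{C}) = D_{C/T} = (E-T,\widehat{\gamma}_{C},\widehat{\delta}_{C})$
by unwinding the definitions of the four functions involved, using the code-theoretic identities from Lemma~\ref{Lem:Lem3} and Remark~\ref{Rem:Thm2Britz}, and then invoking Theorem~\ref{Thm:Contraction} (or equivalently Theorem~\ref{Thm:CharFrobRingCodes} applied to the code $C/T$) to conclude that the resulting triple is a demi-matroid. I would treat the left $R$-linear case in detail; the right case is identical after replacing $C^{\perp}$ by $^{\perp}C$ throughout.

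First I would compute $D_{C/T}$ directly from its definition: writing $E' := E - T$ for the ground set of $C/T$, we have $\alpha_{C/T}(X) = \log_{|R|}|(C/T)\backslash(E'-X)|$ for $X \subseteq E'$, and similarly $\beta_{C/T}(X) = \log_{|R|}|(C/T)^{\perp}\backslash(E'-X)|$. The key observation is the iterated puncturing/shortening identity $(C/T)\backslash(E'-X) = C/(E-(X\cup T)) \cdot \backslash \text{-type manipulation}$; more precisely, shortening $C$ at $T$ and then puncturing the complement of $X$ in $E-T$ has the same effect on the relevant sizes as a single combined operation. Using Lemma~\ref{Lem:Lem3} in the form $|C| = |C/Y|\cdot|C\backslash(E-Y)|$ applied with $Y = X \cup T$ and with $Y = T$, together with the definition $\widehat{\alpha}_C(X) = \alpha_C(X\cup T) - \alpha_C(T)$, I expect to match $\widehat{\alpha}_C$ with $\gamma_{C/T}$ (the "contraction-type" function of $C/T$) and $\widehat{\gamma}_C$ with $\alpha_{C/T}$, and analogously on the dual side. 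Then Remark~\ref{Rem:Thm2Britz}, which relates $|C\backslash X|$ to $|C^{\perp}/X|$, converts between the punctured and shortened descriptions of the dual code, giving $\widehat{\beta}_C = \delta_{C/T}$ and $\widehat{\delta}_C = \beta_{C/T}$. Assembling these four identifications yields the two claimed equalities of triples.

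For the demi-matroid conclusion, I would note that $D_C = (E,\alpha_C,\beta_C)$ is a demi-matroid by Theorem~\ref{Thm:CharFrobRingCodes} since $R$ is Frobenius, so $D_C/T = (E-T,\widehat{\alpha}_C,\widehat{\beta}_C)$ is a demi-matroid by Theorem~\ref{Thm:Contraction}; this immediately gives that all three (equal) triples are demi-matroids. Alternatively, one can apply Theorem~\ref{Thm:CharFrobRingCodes} directly to the code $C/T$ over the Frobenius ring $R$ to see $D_{C/T}$ is a demi-matroid, and the equalities above then transport this to the other two triples. I would include the remark that the identity $\overline{D_C} = (E,\gamma_C,\delta_C)$ combined with Proposition~\ref{Prop:Contraction} ($D/T = \overline{\overline{D}\backslash T}$) gives a purely demi-matroidal derivation of the second equality: $D_{C/T} = \overline{D_C}/T$ in the form $\overline{\overline{D_C}\backslash T}$, and unwinding $\overline{D_C} = (E,\gamma_C,\delta_C)$ through deletion and supplement recovers $(E-T,\widehat{\gamma}_C,\widehat{\delta}_C)$.

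The main obstacle will be the bookkeeping in the first step — carefully verifying that iterated shortening-then-puncturing of $C$ agrees with the single operations defining $(C/T)\backslash(E'-X)$ and $(C/T)^{\perp}\backslash(E'-X)$ at the level of module sizes, and correctly tracking which of the four functions $\widehat{\alpha}_C, \widehat{\beta}_C, \widehat{\gamma}_C, \widehat{\delta}_C$ corresponds to which of $\alpha_{C/T}, \beta_{C/T}, \gamma_{C/T}, \delta_{C/T}$. The subtlety is that contraction of the demi-matroid uses the $\widehat{(\cdot)}$ operator (restriction relative to $T$) while the code-side operation $C/T$ is a shortening, so the "hat" on the code functions and the "hat" on the demi-matroid functions sit on opposite sides of the supplement duality $\overline{D_C} = (E,\gamma_C,\delta_C)$; getting this cross-identification right is where the Frobenius hypothesis (via Remark~\ref{Rem:Thm2Britz} and Lemma~\ref{Lem:Lem3}) does the real work. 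Everything else is routine once the dictionary is set up correctly.
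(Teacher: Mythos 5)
Your overall architecture matches the paper's: compute the defining functions of $C/T$ via Lemma~\ref{Lem:Lem3}, identify them with hatted functions of $C$, and then invoke Theorem~\ref{Thm:CharFrobRingCodes} (applied to $C/T$) or Theorem~\ref{Thm:Contraction} for the demi-matroid conclusion. But the dictionary you set up in your second paragraph — the ``crossed'' identifications $\widehat{\alpha}_C = \gamma_{C/T}$, $\widehat{\gamma}_C = \alpha_{C/T}$, $\widehat{\beta}_C = \delta_{C/T}$, $\widehat{\delta}_C = \beta_{C/T}$ — is wrong, and since you correctly flag this matching as the crux, the proof as planned does not close. There is in fact no crossing: the very computation you set up yields the direct identity $\alpha_{C/T} = \widehat{\alpha}_C$. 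For $X \subseteq E-T$, Lemma~\ref{Lem:Lem3} applied to the code $C/T$ gives
\begin{align*}
\alpha_{C/T}(X)
= \log_{|R|}\frac{|C/T|}{|C/(T\cup X)|}
= \gamma_{C}(E-T) - \gamma_{C}(E-(T\cup X))
= \alpha_{C}(T\cup X) - \alpha_{C}(T)
= \widehat{\alpha}_{C}(X),
\end{align*}
where the middle step uses $\gamma_C = \overline{\alpha_C}$; the $\gamma_C$'s are only intermediate bookkeeping and cancel. This is exactly the paper's proof. Your proposed $\gamma_{C/T} = \widehat{\alpha}_C$ fails already for $C = \{000,110\} \subseteq \FF_2^3$ with $T=\{3\}$, where $\gamma_{C/T}(\{1\}) = 0$ but $\widehat{\alpha}_C(\{1\}) = 1$; indeed, since shortening iterates as $(C/T)/Y = C/(T\cup Y)$, one has $\gamma_{C/T}(X) = \gamma_C(X)$ for all $X \subseteq E-T$, i.e.\ $\gamma_{C/T}$ is a plain restriction of $\gamma_C$ and not a hatted function of anything. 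The same example defeats $\widehat{\delta}_C = \beta_{C/T}$.

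The concluding step of your plan is sound and agrees with the paper: once $D_{C/T} = (E-T,\widehat{\alpha}_C,\widehat{\beta}_C)$ is actually established, the demi-matroid property follows either from Theorem~\ref{Thm:CharFrobRingCodes} applied to $C/T$ (the paper's choice) or from Theorem~\ref{Thm:Contraction} applied to $D_C$. Your instinct that the hats on the code side and the demi-matroid side interact with the supplement duality $\overline{D_C} = (E,\gamma_C,\delta_C)$ is a reasonable one, but the resolution is that the duality is absorbed inside the displayed chain above rather than producing a cross-identification; as written, your step two would assert four identities that are false in general, so the argument would fail there.
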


\begin{proof}
	Let $X \subseteq E-T$. 
    By Lemma~\ref{Lem:Lem3},
	\begin{align*}
		\alpha_{C/T}(X)
		& =
		\log_{|R|} 
		|C/T \backslash (E-(T\cup X))|\\
		& =
		\log_{|R|} 
		\frac{|C/T|}{|C/(T\cup X)|}\\
		& =
		\log_{|R|} |C/T|
		-
		\log_{|R|} |C/(T \cup X)|\\
		& =
		\gamma_{C}(E-T)
		-
		\gamma_{C}(E-(T\cup X))\\
		& =
		\alpha_{C}(E) - \alpha_{C}(T)
		-
		\alpha_{C}(E) + \alpha_{C}(T\cup X)\\
		& =
		\alpha_{C}(T\cup X) - \alpha_{C}(T)\\
		& =
		\widehat{\alpha}_{C}(X).
	\end{align*}
	Similarly, 
	$\beta_{C/T}(X) = \widehat{\beta}_{C}(X)$.
	Therefore, 
	$D_{C/T} = (E-T,\alpha_{C/T},\beta_{C/T}) = (E-T,\widehat{\alpha}_{C},\widehat{\beta}_{C})$.
	By similar arguments, we can show that
	$D_{C/T} = (E-T,\gamma_{C/T},\delta_{C/T}) = (E-T,\widehat{\gamma}_{C},\widehat{\delta}_{C})$.
	Hence by Theorem~\ref{Thm:CharFrobRingCodes}, 
    $D_{C/T}$ is a demi-matroid.
\end{proof}

Theorem~\ref{Thm:CharFrobRingCodes} and Theorem~\ref{Thm:CodeContraction} 
together with Proposition~\ref{Prop:Contraction} imply the following corollary:

\begin{cor}
	$D_{C}/T = D_{C/T}$.
\end{cor}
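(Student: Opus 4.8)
The plan is to prove this by directly comparing two triples on the ground set $E-T$; the genuine work has already been done in Theorem~\ref{Thm:CodeContraction}, so the corollary is essentially an unwinding of definitions. First I would record what $D_C/T$ is: since $R$ is a finite Frobenius ring, Theorem~\ref{Thm:CharFrobRingCodes} tells us that $D_C = (E,\alpha_C,\beta_C)$ is a demi-matroid, so the contraction $D_C/T$ is defined, and by the definition of contraction it is the triple $(E-T,\widehat{\alpha}_C,\widehat{\beta}_C)$, where $\widehat{\alpha}_C(X) = \alpha_C(X\cup T) - \alpha_C(T)$ and $\widehat{\beta}_C(X) = \beta_C(X\cup T) - \beta_C(T)$ for $X \subseteq E-T$; by Theorem~\ref{Thm:Contraction} this is again a demi-matroid.

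Next I would unwind the right-hand side. The shortened code $C/T$ is a left (or right) $R$-linear code of length $n-|T|$, so its associated triple is $D_{C/T} = (E-T,\alpha_{C/T},\beta_{C/T})$ by definition. The computation in the proof of Theorem~\ref{Thm:CodeContraction} --- expanding $|(C/T)\backslash((E-T)-X)| = |C/T|/|C/(T\cup X)|$ via Lemma~\ref{Lem:Lem3} and then rewriting the logarithms using the supplement relation $\overline{D_C} = (E,\gamma_C,\delta_C)$ --- establishes exactly that $\alpha_{C/T} = \widehat{\alpha}_C$ and $\beta_{C/T} = \widehat{\beta}_C$. Substituting into the displayed triple yields $D_{C/T} = (E-T,\widehat{\alpha}_C,\widehat{\beta}_C) = D_C/T$, which is the claim.

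I do not expect any obstacle of substance: the corollary is a bookkeeping statement once Theorem~\ref{Thm:CodeContraction} is in hand, and the only point needing a little care is that the abstract demi-matroid contraction $D \mapsto D/T$ and the code operation $C \mapsto C/T$ are being matched on the same relabelled ground set $E-T$. If one prefers the argument phrased so that Proposition~\ref{Prop:Contraction} enters explicitly, I would instead start from $D_C/T = \overline{\overline{D_C}\backslash T}$, note that $\overline{D_C} = (E,\gamma_C,\delta_C)$, that deletion merely restricts $\gamma_C,\delta_C$ to $2^{E-T}$, and that the outer supplement then reproduces $(E-T,\widehat{\gamma}_C,\widehat{\delta}_C)$, which Theorem~\ref{Thm:CodeContraction} again identifies with $D_{C/T}$.
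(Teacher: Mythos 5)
Your proposal is correct and follows essentially the same route as the paper: the paper also obtains the corollary as an immediate consequence of the definition of demi-matroid contraction together with Theorem~\ref{Thm:CodeContraction} (with Theorem~\ref{Thm:CharFrobRingCodes} and Proposition~\ref{Prop:Contraction} guaranteeing that both sides are well-defined demi-matroids). Your unwinding of $D_C/T$ as $(E-T,\widehat{\alpha}_C,\widehat{\beta}_C)$ and matching it against the triple computed in Theorem~\ref{Thm:CodeContraction} is exactly the intended argument.
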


\section{Harmonic generalization of MacWilliams identity}\label{Sec:MacWilliams}

In this section, 
we introduce the harmonic generalization of $m$-tuple weight enumerators for codes over finite Frobenius rings. 
First, we recall~\cite{BSW2015} for some useful notations and properties.

Let $R$ be a finite Frobenius ring and
let $C$ be a left (or right) $R$-linear code of length $n$. 
For any subset $X \subseteq E$, 
we denote 
\begin{align*}
	A_{C}^{[m]}(X)
	&:=
	\#
	\{
	(\bm{u}_{1},\ldots,\bm{u}_{m}) 
	\in 
	C^{m} 
	\mid 
	\supp(\bm{u}_{1}) \cup \cdots \cup \supp(\bm{u}_{m}) = X
	\},\\
	B_{C}^{[m]}(X)
	& :=
	\#
	\{
	(\bm{u}_{1},\ldots,\bm{u}_{m}) 
	\in 
	C^{m} 
	\mid 
	\supp(\bm{u}_{1}) \cup \cdots \cup \supp(\bm{u}_{m}) \subseteq X\},
\end{align*}
where $C^{m} := \underset{m}{\underbrace{C\times\cdots\times C}}.$ 
Then note that
$B_{C}^{[m]}(X) = |C/(E-X)|^{m}$.

\begin{rem}
	For each $X \subseteq E$, we have
	$
		B_{C}^{[m]}(X)
		=
		\sum\limits_{Y \subseteq X}	
		A_{C}^{[m]}(Y).
    $
\end{rem}

Now 
we have the following identity.

\begin{lem}[\cite{BSW2015}]\label{Lem:InEx}
	$A_{C}^{[m]}(X) = \sum\limits_{Y \subseteq X} (-1)^{|X-Y|}B_{C}^{[m]}(Y)$.
\end{lem}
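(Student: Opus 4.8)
The final statement to prove is Lemma~\ref{Lem:InEx}, the Möbius inversion identity
\[
A_{C}^{[m]}(X) = \sum_{Y \subseteq X} (-1)^{|X-Y|}B_{C}^{[m]}(Y).
\]
The plan is to invoke the standard inclusion--exclusion (Möbius inversion) principle on the Boolean lattice $2^{X}$, using the preceding Remark as the input relation. That Remark records that for each $X \subseteq E$ we have $B_{C}^{[m]}(X) = \sum_{Y \subseteq X} A_{C}^{[m]}(Y)$, i.e.\ the function $B_{C}^{[m]}$ restricted to subsets of $X$ is the ``zeta transform'' of $A_{C}^{[m]}$. The Möbius function of the Boolean lattice is $\mu(Y,X) = (-1)^{|X-Y|}$, so Möbius inversion immediately returns the claimed formula.

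Concretely, I would first state that it suffices to verify the identity for a fixed $X$, and that all sums range over subsets of the finite set $X$. Then I would substitute the Remark's expression for $B_{C}^{[m]}(Y)$ into the right-hand side, obtaining
\[
\sum_{Y \subseteq X} (-1)^{|X-Y|} \sum_{Z \subseteq Y} A_{C}^{[m]}(Z)
=
\sum_{Z \subseteq X} A_{C}^{[m]}(Z) \sum_{Z \subseteq Y \subseteq X} (-1)^{|X-Y|},
\]
after interchanging the order of summation. The inner sum $\sum_{Z \subseteq Y \subseteq X} (-1)^{|X-Y|}$ equals $\sum_{W \subseteq X - Z} (-1)^{|X-Z|-|W|} = (1-1)^{|X-Z|}$ by the binomial theorem, which is $1$ if $Z = X$ and $0$ otherwise. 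Hence the double sum collapses to $A_{C}^{[m]}(X)$, as required.

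I do not expect any genuine obstacle here: this is a textbook Möbius-inversion argument and the only ``content'' is the combinatorial identity $\sum_{W \subseteq S} (-1)^{|W|} = [S = \emptyset]$, applied with $S = X - Z$. The one point worth stating cleanly is the substitution $W = Y - Z$ (valid precisely because $Z \subseteq Y \subseteq X$) that turns the inner sum into a sum over subsets of $X - Z$. Alternatively, one could simply cite the Möbius inversion formula for the Boolean lattice and note that the Remark is exactly its hypothesis, but I would include the short self-contained computation above for completeness.
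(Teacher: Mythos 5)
Your argument is correct. The paper itself gives no proof of Lemma~\ref{Lem:InEx} --- it is quoted from \cite{BSW2015} --- but the identity is exactly the M\"obius inversion over the Boolean lattice of the relation $B_{C}^{[m]}(X)=\sum_{Y\subseteq X}A_{C}^{[m]}(Y)$ recorded in the preceding Remark, and your substitution, interchange of summation, and collapse of the inner sum via $\sum_{W\subseteq S}(-1)^{|W|}=[S=\emptyset]$ is the standard (and essentially the only) way to prove it. Nothing is missing; the change of variable $W=Y-Z$ is handled correctly.
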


\begin{df}
	Let $R$ be a finite Frobenius ring and
	let $C$ be a left (or right) $R$-linear code of length~$n$. 
	For each $f \in \Harm_{d}(n)$,
    the~$m$-\emph{tuple harmonic weight enumerator}
	of $C$ associated to $f$ is defined as follows:
	\[
		W_{C,f}^{[m]}(x,y)
		:=
		\sum_{X \subseteq E}
		\widetilde{f}(X)
		A_{C}^{[m]}(X)
		x^{|E-X|} 
		y^{|X|}.
	\]
\end{df}
The harmonic weight enumerator
$W_{C,f}(x,y)$ is obtained by 
setting $m = 1$ in the above definition. 
Now we have the following identity as a generalization of Theorem~\ref{thm: Bachoc iden.}.

\begin{thm}[MacWilliams-type identity]\label{Thm:MacWilliams}
	Let $R$ be a finite Frobenius ring. 
    Let $W_{C,f}^{[m]}(x,y)$ be the $m$-tuple harmonic weight enumerator of a left $R$-linear code $C$
	of length~$n$ associated to some $f \in \Harm_{d}(n)$.  
	Then
	\[
		W_{C,f}^{[m]}(x,y) 
		= 
		(xy)^{d} 
		Z_{C,f}^{[m]}(x,y),
	\]
	where $Z_{C,f}^{[m]}$ is a homogeneous polynomial of degree~$n-2d$
	satisfying
	\[
		Z_{C^{\perp},f}^{[m]}
		(x,y)
		=
		(-1)^{d}
		\dfrac{(|R|^m)^{d}}{|C|^{m}}
		Z_{C,f}^{[m]}
		\left(
		x+(|R|^{m}-1)y,
		x-y
		\right).
	\]
\end{thm}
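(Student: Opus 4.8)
The plan is to mimic the classical MacWilliams--Greene strategy, but to delay the harmonic bookkeeping until the very end. First I would fix $f \in \Harm_d(n)$ and introduce the ``$m$-tuple coboundary-type'' generating function built from $B_C^{[m]}(X)=|C/(E-X)|^m$ rather than from $A_C^{[m]}(X)$; that is, set
\[
  \overline{W}_{C,f}^{[m]}(x,y)
  :=
  \sum_{X\subseteq E}\widetilde f(X)\,B_C^{[m]}(X)\,x^{|E-X|}y^{|X|}.
\]
Using the inclusion--exclusion identity $A_C^{[m]}(X)=\sum_{Y\subseteq X}(-1)^{|X-Y|}B_C^{[m]}(Y)$ of Lemma~\ref{Lem:InEx} and swapping the order of summation, one gets a clean closed form relating $W_{C,f}^{[m]}$ and $\overline{W}_{C,f}^{[m]}$; concretely $W_{C,f}^{[m]}(x,y)$ should come out as $\overline W$ evaluated at $(x-y,y)$ up to routine manipulation, exactly as in the $m=1$ harmonic case treated in \cite{CMO20xx}. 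The point of passing to $B_C^{[m]}$ is that it is a clean power of a cardinality, so the Frobenius-ring duality enters transparently.

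Next I would establish the key transform identity for $\overline W$. Here I would invoke Remark~\ref{Rem:Thm2Britz}(i), namely $|R|^{|E-X|}/|C\backslash X| = |C^{\perp}/X|$, together with Lemma~\ref{Lem:Lem3} to rewrite $B_{C^{\perp}}^{[m]}(X)=|C^{\perp}/(E-X)|^m$ in terms of $|C\backslash(E-X)|$ and hence in terms of the quantities attached to $C$. Substituting and re-indexing the sum over $X$ (replacing $X$ by $E-X$ and expanding $x^{|X|}y^{|E-X|}$-type monomials binomially) should produce
\[
  \overline{W}_{C^{\perp},f}^{[m]}(x,y)
  =
  \frac{(|R|^m)^{?}}{|C|^{m}}\;
  \overline{W}_{C,f}^{[m]}\!\bigl(\text{linear combination of }x,y\bigr),
\]
where the exponent of $|R|^m$ is forced by homogeneity once one tracks degrees. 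I expect this is the computational heart of the argument: getting the substitution $x\mapsto x+(|R|^m-1)y$, $y\mapsto x-y$ to fall out correctly, and verifying that the harmonic factor $\widetilde f(X)$ survives the re-indexing with the right sign. The sign $(-1)^d$ is precisely what Remark~\ref{Rem:New} controls: when one expands the binomials, all contributions of ``degree below $d$'' cancel by the harmonic condition $\sum_{Z\in E_d,\,X\subseteq Z}f(Z)=0$, and the surviving terms pick up a single net factor $(xy)^d$ (hence the polynomial $Z_{C,f}^{[m]}$) together with the sign $(-1)^d$ from the Möbius-type alternation, exactly mirroring Theorem~\ref{thm: Bachoc iden.}.

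Finally I would assemble the pieces: combine the $W\leftrightarrow\overline W$ dictionary from the first step, apply it to both $C$ and $C^{\perp}$, insert the $\overline W$ transform identity from the second step, and simplify. The homogeneity claim ($Z_{C,f}^{[m]}$ of degree $n-2d$) is automatic from $W_{C,f}^{[m]}$ being homogeneous of degree $n$ and divisible by $(xy)^d$; divisibility by $(xy)^d$ itself follows because $\widetilde f(X)=0$ whenever $|X|<d$ (no $d$-subset fits inside $X$) and, dually, $\sum_{|X|\ge n-d}$-terms vanish by Remark~\ref{Rem:New}, so $W_{C,f}^{[m]}$ is a combination of monomials $x^{n-k}y^{k}$ with $d\le k\le n-d$. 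The main obstacle, as noted, is the bookkeeping in step two: tracking the exponent of $|R|^m$ and confirming that the linear substitution is literally $(x+(|R|^m-1)y,\;x-y)$ and not some cosmetically different but inequivalent pair. Everything else is either a direct appeal to Lemmas~\ref{Lem:Lem3} and~\ref{Lem:InEx}, Remark~\ref{Rem:Thm2Britz}, or the harmonic vanishing Remarks~\ref{Rem:Gamma} and~\ref{Rem:New}, or routine polynomial algebra that I would not grind through here.
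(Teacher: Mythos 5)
Your proposal is correct and follows essentially the same route as the paper: the $B_C^{[m]}$-based enumerator you call $\overline W$ is exactly what underlies Theorems~\ref{Thm:NewZ} and~\ref{Thm:NewZperp}, which the paper proves via Lemma~\ref{Lem:InEx}, Remark~\ref{Rem:Thm2Britz} and complement re-indexing, and then combines precisely as you describe. The one step you label ``routine'' that carries the real work is the interchange of summation in the presence of the factor $\widetilde f(X)$ (which varies with the outer index, so the inner sum is not a plain binomial expansion); the paper handles this with Lemma~\ref{Lem:Bachoc}, i.e.\ $f^{(i)}(J)=(-1)^{d-i}\binom{d}{i}\widetilde f(J)$, together with an auxiliary binomial identity, exactly as in the $m=1$ case you cite.
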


We will prove this theorem at the end of this section.


\begin{lem}[\rm\cite{Bachoc}]\label{Lem:Bachoc}
	Let $f \in \Harm_{d}(n)$ and $J \subseteq E$, 
    and define
	\[
	f^{(i)}(J)
	:=
	\sum_{\substack{Z \in E_{d},\\ |J \cap Z| = i}}
	f(Z).
	\]
	Then for all 
	$0 \leq i \leq d$,
	$f^{(i)}(J) = (-1)^{d-i} \binom{d}{i} \widetilde{f}(J)$.
\end{lem}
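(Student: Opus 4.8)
The plan is to prove the identity $f^{(i)}(J) = (-1)^{d-i}\binom{d}{i}\widetilde{f}(J)$ by downward induction on $i$, starting from $i = d$ and descending to $i = 0$. The base case $i = d$ is immediate: the condition $|J\cap Z| = d$ with $Z \in E_d$ forces $Z \subseteq J$, so $f^{(d)}(J) = \sum_{Z\in E_d,\, Z\subseteq J} f(Z) = \widetilde{f}(J)$ by the defining formula~(\ref{Equ:TildeF}), and $(-1)^{d-d}\binom{d}{d} = 1$. So the content is entirely in the inductive step.

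For the inductive step, suppose the formula holds for $f^{(i+1)}(J), f^{(i+2)}(J), \ldots, f^{(d)}(J)$; I want to deduce it for $f^{(i)}(J)$. The key device is a counting identity obtained from the harmonicity of $f$. Concretely, for each subset $X \in E_i$ with $X \subseteq J$, Remark~\ref{Rem:Gamma} gives $\sum_{Z\in E_d,\, X\subseteq Z} f(Z) = 0$. Summing this vanishing relation over all $X \in E_i$ with $X \subseteq J$ and swapping the order of summation yields
\[
	0
	=
	\sum_{\substack{X\in E_i,\, X\subseteq J}}
	\ \sum_{\substack{Z\in E_d,\, X\subseteq Z}}
	f(Z)
	=
	\sum_{Z\in E_d}
	\left(\#\{X\in E_i : X\subseteq J\cap Z\}\right) f(Z)
	=
	\sum_{Z\in E_d}
	\binom{|J\cap Z|}{i} f(Z).
\]
Splitting the last sum according to the value $j = |J\cap Z| \in \{0,1,\ldots,d\}$ gives $\sum_{j=0}^{d}\binom{j}{i} f^{(j)}(J) = 0$, i.e. $\binom{i}{i} f^{(i)}(J) = -\sum_{j=i+1}^{d}\binom{j}{i} f^{(j)}(J)$. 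Now substitute the induction hypothesis $f^{(j)}(J) = (-1)^{d-j}\binom{d}{j}\widetilde{f}(J)$ for $j \geq i+1$:
\[
	f^{(i)}(J)
	=
	-\widetilde{f}(J)\sum_{j=i+1}^{d} (-1)^{d-j}\binom{j}{i}\binom{d}{j}.
\]

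The remaining task is the binomial identity $\sum_{j=i+1}^{d} (-1)^{d-j}\binom{j}{i}\binom{d}{j} = -(-1)^{d-i}\binom{d}{i}$, which I expect to be the only slightly technical point. Using the subset-of-a-subset identity $\binom{j}{i}\binom{d}{j} = \binom{d}{i}\binom{d-i}{j-i}$ and reindexing $k = j-i$, the sum becomes $\binom{d}{i}\sum_{k=1}^{d-i}(-1)^{d-i-k}\binom{d-i}{k}$; since $\sum_{k=0}^{d-i}(-1)^{d-i-k}\binom{d-i}{k} = (1-1)^{d-i} = 0$ for $d - i \geq 1$, the sum from $k=1$ equals $-(-1)^{d-i}\binom{d-i}{0} = -(-1)^{d-i}$, giving exactly $-(-1)^{d-i}\binom{d}{i}$ as needed. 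Hence $f^{(i)}(J) = (-1)^{d-i}\binom{d}{i}\widetilde{f}(J)$, completing the induction. The one point to watch is that the case $d = 0$ (where $i = 0$ only and the alternating-sum argument is vacuous) is already covered by the base case, so no separate treatment is needed; everything else is bookkeeping.
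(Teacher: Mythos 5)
Your proof is correct. The paper itself offers no proof of Lemma~\ref{Lem:Bachoc}, citing it directly from Bachoc, so there is nothing internal to compare against; your downward induction on $i$ --- base case $i=d$ giving $f^{(d)}(J)=\widetilde{f}(J)$, the double-counting of pairs $(X,Z)$ with $X\in E_i$, $X\subseteq J\cap Z$ via Remark~\ref{Rem:Gamma} to get $\sum_{j}\binom{j}{i}f^{(j)}(J)=0$, and the reduction $\binom{j}{i}\binom{d}{j}=\binom{d}{i}\binom{d-i}{j-i}$ to collapse the alternating sum --- is exactly the standard argument from Bachoc's original paper and all steps check out, including the degenerate cases $|J|<i$ and $d=0$.
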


\begin{rem}\label{Rem:BachocLem}
	From the definition of $\widetilde{f}$ 
	for $f \in \Harm_{d}(n)$,
	we have $\widetilde{f}(J) = 0$
	for each $J \in 2^{E}$ such that $|J| < d$. 
	Let $I, J \in 2^{E}$ such that $I = E - J$.
	Then
	\begin{align*}
		\widetilde{f}(J)
		=
		\sum_{\substack{Z \in E_{d},\\Z \subset J}}
		f(Z)
		=
		\sum_{\substack{Z \in E_{d},\\ |Z \cap I|=0}}
		f(Z)
		=
		f^{(0)}(I)
		=
		(-1)^{d} \widetilde{f}(E - J).		
	\end{align*}	
	We have from the above equality that if $ |J| > n-d$, 
	then $\widetilde{f}(J) = 0$. 
\end{rem}

From the above discussion and the first part of Theorem~\ref{Thm:MacWilliams}, 
we see that
\[
	Z_{C,f}^{[m]}(x,y) 
	= 
	\sum_{\substack{X \subseteq E,\\ d \le|X|\le n-d}}
	\widetilde{f}(X)
	A_{C}^{[m]}(X) 
	x^{|E-X|-d}y^{|X|-d}.
\]

\begin{thm}\label{Thm:NewZ}
	Let $R$ be a finite Frobenius ring and 
    let $C$ be a left (or right) $R$-linear code of length~$n$. 
    For each $f \in \Harm_{d}(n)$,
    \[    
		Z_{C,f}^{[m]}(x,y) 
		= 
		(-1)^{d}
		\sum\limits_{\substack{X \subseteq E,\\ d \le|X|\le n-d}}
		\widetilde{f}(X)
		(|R|^{m})^{\alpha_{C}(E)-\alpha_{C}(X)} 
		(x-y)^{|X|- d} y^{|E-X|-d}.
	\]
\end{thm}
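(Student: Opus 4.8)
The plan is to start from the known formula
\[
	Z_{C,f}^{[m]}(x,y)
	=
	\sum_{\substack{X \subseteq E,\\ d \le|X|\le n-d}}
	\widetilde{f}(X)
	A_{C}^{[m]}(X)
	x^{|E-X|-d}y^{|X|-d}
\]
derived just above the statement, and to convert the coefficients $A_C^{[m]}(X)$ into the $B_C^{[m]}$-quantities via the inclusion–exclusion identity of Lemma~\ref{Lem:InEx}, namely $A_C^{[m]}(X)=\sum_{Y\subseteq X}(-1)^{|X-Y|}B_C^{[m]}(Y)$. Substituting this and interchanging the order of summation, one gets a double sum over pairs $Y\subseteq X$; for fixed $Y$, the inner sum over $X$ with $Y\subseteq X\subseteq E$ carries the factor $(-1)^{|X-Y|}\,x^{|E-X|-d}y^{|X|-d}$ times $\widetilde{f}(X)$. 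The main algebraic task is then to evaluate, for each fixed $Y$, the quantity $\sum_{X\supseteq Y}\widetilde{f}(X)(-1)^{|X\setminus Y|}x^{|E|-|X|-d}y^{|X|-d}$, and to recognize that it collapses to something proportional to $\widetilde f(Y)\,(x-y)^{|Y|-d}y^{|E-Y|-d}$ up to a sign.

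The key step is therefore a harmonic-function lemma: for $f\in\Harm_d(n)$ and $Y\subseteq E$, a sum of the form $\sum_{X\supseteq Y}\widetilde f(X)\,x^{n-|X|}(-y)^{|X|-|Y|}$ should reduce to $\widetilde f(Y)(x-y)^{|Y|-d}y^{n-|Y|}$ after accounting for the $(xy)^{-d}$ normalization already built into $Z^{[m]}$. To prove this I would expand $\widetilde f(X)=\sum_{Z\in E_d,\,Z\subseteq X}f(Z)$, swap sums again so that the outer sum is over $Z\in E_d$, and split the range of $Z$ according to $|Z\cap Y|=i$. For $Z$ with a fixed intersection pattern with $Y$, the sum over the $X$ containing both $Y$ and $Z$ is an elementary binomial sum in the coordinates of $E\setminus(Y\cup Z)$, producing a power of $(x-y)$; the remaining sum over $Z$ is exactly the quantity $f^{(i)}(Y)$ from Lemma~\ref{Lem:Bachoc}, which equals $(-1)^{d-i}\binom{d}{i}\widetilde f(Y)$. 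Summing the resulting $\binom{d}{i}$-weighted terms gives another binomial expansion that contributes the remaining factor and the global sign $(-1)^d$; Remark~\ref{Rem:BachocLem} handles the boundary (the terms with $|X|<d$ or $|X|>n-d$ vanish, so extending or restricting the summation range is harmless). Finally I would use $B_C^{[m]}(Y)=|C/(E-Y)|^m=(|R|^m)^{\gamma_C(E-Y)}$ together with $\gamma_C(E-Y)=\alpha_C(E)-\alpha_C(Y)$ (from the demi-matroid relations, i.e.\ $\overline{D_C}=(E,\gamma_C,\delta_C)$ and property (D2)/(D3)) to rewrite $B_C^{[m]}(Y)$ as $(|R|^m)^{\alpha_C(E)-\alpha_C(Y)}$, and rename $Y$ to $X$ to match the claimed formula.

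The main obstacle I anticipate is bookkeeping the two sign-and-exponent flips correctly: one $(-1)$ comes from the inclusion–exclusion of Lemma~\ref{Lem:InEx}, and one $(-1)^{d-i}$ from Lemma~\ref{Lem:Bachoc}, and they must combine with the binomial summation $\sum_i\binom{d}{i}(\text{stuff})^i$ to yield exactly $(-1)^d(x-y)^{|X|-d}$ and not, say, $(y-x)^{|X|-d}$ or an off-by-one exponent. I would pin this down by first checking the identity on the toy case $d=0$ (where $\widetilde f$ is constant and the statement is just the ordinary $B\leftrightarrow A$ inversion rewritten in $(x-y)$-coordinates) and on a small example such as $E=\{1,2,3,4\}$, $d=2$ using the explicit $\Harm_2(4)$ element from the Example above, before committing to the general index manipulation. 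A secondary point requiring care is that the exponent of $y$ in the final answer is $|E-X|-d$, i.e.\ it pairs with $|E-X|$ and not $|X|$; this asymmetry (compared with the defining formula for $Z_{C,f}^{[m]}$) is exactly what the harmonic lemma produces, and it is worth stating that intermediate lemma cleanly as its own display so the substitution into $Z_{C,f}^{[m]}$ is transparent.
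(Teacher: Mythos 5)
Your outline follows the paper's proof almost step for step (inclusion--exclusion via Lemma~\ref{Lem:InEx}, interchange of summation, expansion of $\widetilde{f}(Y\cup W)$ over $Z\in E_d$ stratified by $|Z\cap Y|$, Lemma~\ref{Lem:Bachoc}, and a binomial identity), but the endgame as you describe it does not close, and the error is precisely in the sign-and-exponent bookkeeping you flagged as your main worry. In the paper's argument the signs $(-1)^{d-j}$ coming from Lemma~\ref{Lem:Bachoc} are \emph{entirely consumed} by the binomial identity
\[
\sum_{j=0}^{d}(-1)^{d-j}\binom{|E-Y|-d+j}{|E-Y|-i}\binom{d}{j}=\binom{|E-Y|-d}{i},
\]
so the inner sum collapses with \emph{no} residual sign to $\widetilde{f}(Y)\,(x-y)^{|E-Y|-d}\,y^{|Y|-d}$ --- note that $(x-y)$ pairs with $|E-Y|$, not with $|Y|$. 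Your proposed ``harmonic lemma'' asserts the opposite pairing, and your own suggested sanity check at $d=0$ already refutes it: there the inner sum is $\sum_{W\subseteq E-Y}x^{|E-Y|-|W|}(-y)^{|W|}\,y^{|Y|}=(x-y)^{|E-Y|}y^{|Y|}$, not $(x-y)^{|Y|}y^{|E-Y|}$.

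The missing step is the final complementation $Y\mapsto E-Y$ in the outer sum, using the \emph{other} half of Remark~\ref{Rem:BachocLem}, namely $\widetilde{f}(E-Y)=(-1)^{d}\widetilde{f}(Y)$ (you invoke that remark only for the vanishing of boundary terms). This single substitution is what simultaneously (a) produces the global factor $(-1)^{d}$ in the statement of Theorem~\ref{Thm:NewZ}, (b) flips the exponents to $(x-y)^{|Y|-d}y^{|E-Y|-d}$, and (c) converts $B_{C}^{[m]}(Y)=|C/(E-Y)|^{m}=(|R|^{m})^{\gamma_{C}(Y)}$ into $|C/Y|^{m}=(|R|^{m})^{\gamma_{C}(E-Y)}=(|R|^{m})^{\alpha_{C}(E)-\alpha_{C}(Y)}$. (Your identification $B_{C}^{[m]}(Y)=(|R|^{m})^{\gamma_{C}(E-Y)}$ is off by a complement for the same reason: by definition $\gamma_{C}(X)=\log_{|R|}|C/(E-X)|$.) With the complementation inserted and the $(-1)^{d}$ reattributed from the Hahn-polynomial step to this substitution, your argument becomes the paper's proof.
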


\begin{proof}
	By Lemma~\ref{Lem:InEx} and Lemma~\ref{Lem:Bachoc},
	\begingroup
	\allowdisplaybreaks
	\begin{align*}
		&Z_{C,f}^{[m]}(x,y)\\
		& =
		\sum_{X \subseteq E}
		\widetilde{f}(X)
		A_{C}^{[m]}(X)
		x^{|E-X|-d} 
		y^{|X|-d}\\
		& =
		\sum_{X \subseteq E}
		\widetilde{f}(X)\sum_{Y \subseteq X}
		(-1)^{|X-Y|}
		B_{C}^{[m]}(Y)
		x^{|E-X|-d} 
		y^{|X|-d}\\
		& =
		\sum_{Y \subseteq E}
		B_{C}^{[m]}(Y)
		\sum_{Y \subseteq X \subseteq E}
		(-1)^{|X-Y|}\widetilde{f}(X)
		x^{|E-X|-d} 
		y^{|X|-d}\\
		& =
		\sum_{Y \subseteq E}
		B_{C}^{[m]}(Y)
		\sum_{W \subseteq E-Y}
		(-1)^{|W|}\widetilde{f}(Y\cup W)
		x^{|E-(Y\cup W)|-d} 
		y^{|Y\cup W|-d}\\
		& =
		\sum_{Y \subseteq E}
		B_{C}^{[m]}(Y)
		\sum_{i=0}^{|E-Y|-d}\sum_{\substack{W \subseteq E-Y,\\|W|=i}}
		(-1)^{i}\widetilde{f}(Y\cup W)
		x^{|E-Y|-d-i} 
		y^{|Y|-d+i}\\
		& =
		\sum_{Y \subseteq E}
		B_{C}^{[m]}(Y) y^{|Y|-d}
		\sum_{i=0}^{|E-Y|-d}\sum_{\substack{W \subseteq E-Y,\\|W|=i}}
		\widetilde{f}(Y\cup W)
		x^{|E-Y|-d-i} 
		(-y)^{i}\\
		& =
		\sum_{Y \subseteq E}
		B_{C}^{[m]}(Y) y^{|Y|-d}
		\sum_{i=0}^{|E-Y|-d}\sum_{\substack{W \subseteq E-Y,\\|W|=i}}
		\sum_{\substack{Z\in E_d,\\Z\subseteq Y\cup W}} f(Z)
		x^{|E-Y|-d-i} 
		(-y)^{i}\\
		& =
		\sum_{Y \subseteq E}
		B_{C}^{[m]}(Y) y^{|Y|-d}
		\sum_{i=0}^{|E-Y|-d}\sum_{Z\in E_d}
		\sum_{\substack{W\subseteq E-Y,\\Z\cap(E-Y)\subseteq W,\\|W|=i}} f(Z)
		x^{|E-Y|-d-i} 
		(-y)^{i}\\
		& =
		\sum_{Y \subseteq E}
		B_{C}^{[m]}(Y) y^{|Y|-d}
		\sum_{i=0}^{|E-Y|-d}\sum_{j=0}^d\sum_{\substack{Z\in E_d,\\|Y\cap Z|=j}}
		\sum_{\substack{W\subseteq E-Y,\\Z\cap(E-Y)\subseteq W,\\|W|=i}} f(Z)
		x^{|E-Y|-d-i} 
		(-y)^{i}\\
		& =
		\sum_{Y \subseteq E}
		B_{C}^{[m]}(Y) y^{|Y|-d}
		\sum_{i=0}^{|E-Y|-d}\sum_{j=0}^d
		\binom{|E-Y|-(d-j)}{i-(d-j)} \sum_{\substack{Z\in E_d,\\|Y\cap Z|=j}}f(Z)
		x^{|E-Y|-d-i} 
		(-y)^{i}\\
		& =
		\sum_{Y \subseteq E}
		B_{C}^{[m]}(Y) y^{|Y|-d}
		\sum_{i=0}^{|E-Y|-d}\sum_{j=0}^d
		\binom{|E-Y|-d+j}{|E-Y|-i} f^{(j)}(Y)
		x^{|E-Y|-d-i} 
		(-y)^{i}\\
		& =
		\sum_{Y \subseteq E}
		B_{C}^{[m]}(Y) y^{|Y|-d}
		\sum_{i=0}^{|E-Y|-d}\sum_{j=0}^d
		\binom{|E-Y|-d+j}{|E-Y|-i}(-1)^{d-j}\binom{d}{j}\widetilde{f}(Y)
		x^{|E-Y|-d-i} 
		(-y)^{i}\\
		& =
		\sum_{Y \subseteq E}
		\widetilde{f}(Y)
		B_{C}^{[m]}(Y)
		y^{|Y|-d}
		\sum_{i = 0}^{|E-Y|-d}
		\sum_{j=0}^d
		(-1)^{d-j}
		\binom{|E-Y|-d+j}{|E-Y|-i}\binom{d}{j}
		x^{(|E-Y|-d)-i}
		(-y)^{i}\\
		& =
		\sum_{\substack{Y \subseteq E,\\ d \le|Y|\le n-d}}
		\widetilde{f}(Y)
		B_{C}^{[m]}(Y)
		y^{|Y|-d}
		\sum_{i = 0}^{|E-Y|-d}
		\binom{|E-Y|-d}{i}
		x^{(|E-Y|-d)-i}
		(-y)^{i}\\
		& =
		\sum_{\substack{Y \subseteq E,\\ d \le|Y|\le n-d}}
		\widetilde{f}(Y)
		B_{C}^{[m]}(Y)
		(x-y)^{|E-Y|-d}
		y^{|Y|-d}\\
		& =
		\sum_{\substack{Y \subseteq E,\\ d \le|Y|\le n-d}}
		\widetilde{f}(Y)
		|C/(E-Y)|^{m}
		(x-y)^{|E-Y|-d}
		y^{|Y|-d}\\
		& = 
		(-1)^{d}
		\sum_{\substack{Y \subseteq E,\\ d \le|Y|\le n-d}}
		\widetilde{f}(Y)
		|C/Y|^{m}
		(x-y)^{|Y|-d}
		y^{|E-Y|-d}\\
		& =
		(-1)^{d}
		\sum_{\substack{Y \subseteq E,\\ d \le|Y|\le n-d}}
		\widetilde{f}(Y)
		(|R|^{m})^{\gamma_{C}(E-Y)}
		(x-y)^{|Y|-d}
		y^{|E-Y|-d}\\
		& = 
		(-1)^{d}
		\sum_{\substack{Y \subseteq E,\\ d \le|Y|\le n-d}}
		\widetilde{f}(Y)
		(|R|^{m})^{\alpha_{C}(E)-\alpha_{C}(Y)}
		(x-y)^{|Y|-d}
		y^{|E-Y|-d}.
	\end{align*}
	\endgroup
	This completes the proof.
\end{proof}

In the proof above, we use a binomial identity as follows:
\[\sum_{j=0}^d
(-1)^{d-j}\binom{|E-Y|-d+j}{|E-Y|-i}\binom{d}{j}=\binom{|E-Y|-d}{i}.\]    
\begin{proof}
	For any polynomial $p(x,y)$, let $\left[x^m y^n\right]p(x,y)$ denote the coefficient of $x^m y^n$ in the $p(x,y)$. Notice that
	\begin{align*}
		\binom{|E-Y|-d}{i}&=\left[x^{|E-Y|-i}y^i\right](x+y)^{|E-Y|-d}x^d\\
		&=\left[x^{|E-Y|-i}y^i\right](x+y)^{|E-Y|-d}(x+y-y)^d\\
		&=\left[x^{|E-Y|-i}y^i\right]\sum_{j=0}^d\binom{d}{j}(x+y)^{|E-Y|-d+j}(-y)^{d-j}\\
		&=\sum_{j=0}^d\binom{d}{j}\left[x^{|E-Y|-i}y^i\right](x+y)^{|E-Y|-d+j}(-y)^{d-j}\\
		&=\sum_{j=0}^d
		(-1)^{d-j}\binom{|E-Y|-d+j}{|E-Y|-i}\binom{d}{j}.
	\end{align*}
\end{proof}

\begin{thm}\label{Thm:NewZperp}
	Let $R$ be a finite Frobenius ring and let $C$ be a left $R$-linear code of length~$n$. 
	Let $f \in \Harm_{d}(n)$.
	Then
	\[
		Z_{C^{\perp},f}^{[m]}(x,y) 
		= 
		\sum\limits_{\substack{X \subseteq E,\\ d \le|X|\le n-d}}
		\widetilde{f}(X)
		(|R|^{m})^{d-\alpha_{C}(X)} 
		(x-y)^{|E-X|- d} 
		(|R|^m y)^{|X|-d}.
	\]
\end{thm}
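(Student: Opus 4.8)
The plan is to deduce this directly from Theorem~\ref{Thm:NewZ}, together with the Frobenius-ring identity $|C\backslash(E-X)|\cdot|C^{\perp}/(E-X)| = |R|^{|X|}$, essentially by reusing the computation already carried out in the proof of that theorem. The only genuinely new input is the passage from $C$ to $C^{\perp}$, which amounts to pure bookkeeping with the shortened and punctured codes.

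First I would note that the chain of equalities in the proof of Theorem~\ref{Thm:NewZ} establishes, before any reference to $\alpha_{C}$, that for an arbitrary left $R$-linear code $C'$ of length $n$ and any $f\in\Harm_{d}(n)$,
\[
	Z_{C',f}^{[m]}(x,y)
	=
	\sum_{\substack{Y\subseteq E,\\ d\le|Y|\le n-d}}
	\widetilde{f}(Y)\,B_{C'}^{[m]}(Y)\,(x-y)^{|E-Y|-d}\,y^{|Y|-d},
\]
since those steps use only Lemma~\ref{Lem:InEx}, Lemma~\ref{Lem:Bachoc}, the binomial identity proved just after Theorem~\ref{Thm:NewZ}, and the relation $B_{C'}^{[m]}(Y)=|C'/(E-Y)|^{m}$. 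Applying this with $C'=C^{\perp}$ gives the same formula with $B_{C^{\perp}}^{[m]}(Y)=|C^{\perp}/(E-Y)|^{m}$.

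Next I would rewrite $|C^{\perp}/(E-Y)|$ in terms of $\alpha_{C}$. Since $R$ is Frobenius, the characterization of codes over Frobenius rings yields $|C\backslash(E-Y)|\cdot|C^{\perp}/(E-Y)|=|R|^{|Y|}$, and $|C\backslash(E-Y)|=|R|^{\alpha_{C}(Y)}$ by the definition of $\alpha_{C}$; hence $|C^{\perp}/(E-Y)|=|R|^{\,|Y|-\alpha_{C}(Y)}$, so $B_{C^{\perp}}^{[m]}(Y)=(|R|^{m})^{|Y|-\alpha_{C}(Y)}$. Substituting this, splitting $(|R|^{m})^{|Y|-\alpha_{C}(Y)}=(|R|^{m})^{d-\alpha_{C}(Y)}(|R|^{m})^{|Y|-d}$, and absorbing the factor $(|R|^{m})^{|Y|-d}$ into $y^{|Y|-d}$ to form $(|R|^{m}y)^{|Y|-d}$, gives exactly the claimed expression after renaming $Y$ as $X$.

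There is no real obstacle here; the only thing to be careful about is the bookkeeping in the exponents, namely keeping straight that $\alpha_{C}(Y)=\log_{|R|}|C\backslash(E-Y)|$ and that it is $|C^{\perp}/(E-Y)|$ (the shortened code), not $|C^{\perp}\backslash(E-Y)|$, that appears as $B_{C^{\perp}}^{[m]}(Y)$. As an alternative route one could apply Theorem~\ref{Thm:NewZ} verbatim to $C^{\perp}$, then substitute $X\mapsto E-X$ using $\widetilde{f}(E-X)=(-1)^{d}\widetilde{f}(X)$ from Remark~\ref{Rem:BachocLem} together with $\alpha_{C^{\perp}}=\beta_{C}$ and the relation $\overline{D_{C}}=(E,\gamma_{C},\delta_{C})$; this lands on the same identity $|C^{\perp}/(E-X)|=|R|^{\,|X|-\alpha_{C}(X)}$ and hence on the same conclusion.
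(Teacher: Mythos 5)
Your proposal is correct and follows essentially the same route as the paper: apply the computation behind Theorem~\ref{Thm:NewZ} to $C^{\perp}$ and then rewrite $|C^{\perp}/(E-Y)|$ via the Frobenius duality $|C\backslash(E-Y)|\cdot|C^{\perp}/(E-Y)|=|R|^{|Y|}$ as $(|R|^{m})^{|Y|-\alpha_{C}(Y)}$. The only (cosmetic) difference is that you enter the chain of equalities one step earlier, at the $B_{C^{\perp}}^{[m]}$ stage, whereas the paper applies the final form of Theorem~\ref{Thm:NewZ} to $C^{\perp}$ and then performs the complementation $X\mapsto E-X$ using $\widetilde{f}(E-X)=(-1)^{d}\widetilde{f}(X)$ at the end.
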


\begin{proof}
	By Theorem~\ref{Thm:NewZ} and Remark~\ref{Rem:Thm2Britz},
    we see that
	\begin{align*}
		Z_{C^{\perp},f}^{[m]}(x,y) 
		& =
		(-1)^{d}
		\sum_{\substack{X \subseteq E,\\ d \le|X|\le n-d}}
		\widetilde{f}(X)
		|C^{\perp}/X|^{m}
		(x-y)^{|X|-d}
		y^{|E-X|-d}\\
		& =
		(-1)^{d}
		\sum_{\substack{X \subseteq E,\\ d \le|X|\le n-d}}
		\widetilde{f}(X)
		\left(
		\frac{|R|^{|E-X|}}{|C\backslash X|}
		\right)^m
		(x-y)^{|X|-d}
		y^{|E-X|-d}\\
		& =
		(-1)^{d}
		\sum_{\substack{X \subseteq E,\\ d \le|X|\le n-d}}
		\widetilde{f}(X)
		\bigg(
		\frac{|R|^{|E-X|-d+d}}{|R|^{\alpha_{C}(E-X)}}
		\bigg)^m
		(x-y)^{|X|-d}
		y^{|E-X|-d}\\
		& =
		(-1)^{d}
		\sum_{\substack{X \subseteq E,\\ d \le|X|\le n-d}}
		\widetilde{f}(X)
		(|R|^m)^{d-\alpha_{C}(E-X)}
		(x-y)^{|X|-d}
		(|R|^{m}y)^{|E-X|-d}\\
		& =
		\sum_{\substack{X \subseteq E,\\ d \le|X|\le n-d}}
		\widetilde{f}(X)
		(|R|^m)^{d-\alpha_{C}(X)}
		(x-y)^{|E-X|-d}
		(|R|^{m}y)^{|X|-d}.
	\end{align*}
This completes the proof.
\end{proof}


\begin{proof}[Proof of Theorem~\ref{Thm:MacWilliams}]
	The technical Lemma~\ref{Lem:Bachoc} and Remark~\ref{Rem:BachocLem} show that $Z_{C,f}^{[m]}(x,y)$ is a polynomial. 
	By Theorems~\ref{Thm:NewZperp} {and~\ref{Thm:NewZ}}, 
	\begin{align*}
		Z_{C^{\perp},f}^{[m]}(x,y) 
		& = 
		\sum\limits_{\substack{X \subseteq E,\\ d \le|X|\le n-d}}
		\widetilde{f}(X)
		(|R|^{m})^{d-\alpha_{C}(X)} 
		(x-y)^{|E-X|- d} 
		(|R|^m y)^{|X|-d}\\
		& =
		(-1)^{d}
		\frac{(|R|^m)^{d}}{(|R|^m)^{\alpha_{C}(E)}}\\
		& \times
			(-1)^{d}
			\sum\limits_{\substack{X \subseteq E,\\ d \le|X|\le n-d}}
			\widetilde{f}(X)
			(|R|^{m})^{\alpha_{C}(E)-\alpha_{C}(X)}  
			(|R|^m y)^{|X|-d}
			(x-y)^{|E-X|- d}
            \\
		& =
		(-1)^{d}
		\frac{(|R|^{m})^{d}}{|C|^{m}}
		Z_{C,f}^{[m]}
		\left(
			{x+(|R|^m-1)y},
			{x-y}
		\right).
	\end{align*}
Hence the proof is completed.
\end{proof}

\section{Harmonic generalization of Greene's theorem}\label{Sec:Greene}

Crapo~\cite{Crapo} presented two matroid polynomials,
namely the Tutte polynomial and the coboundary polynomial.
The Tutte polynomial was originally introduced for graphs by Tutte~\cite{Tutte1954,Tutte1967} 
who called it the dichromatic polynomial. 
Coboundary polynomials for matroids are equivalent to Tutte polynomials.
For a detailed discussions of these polynomials, we refer the reader to~\cite{BrCa22}.
In this section, 
we define the Tutte polynomial and the coboundary polynomial of 
a demi-matroid associated to a discrete harmonic function, and discuss some of their properties. 
Finally, we give the harmonic analogue of Greene's Theorem. 

\begin{df}
	Let $D = (E,s,t)$ be a demi-matroid and 
	$f$ be a harmonic function with degree~$d$.
	Then the \emph{harmonic Tutte polynomial} of $D$
	associated with $f$ is defined as:
	\[
	T(D,f;x,y)
	:=
	\sum_{X \subseteq E}
	\widetilde{f}(X)
	(x-1)^{s(E)-s(X)}
	(y-1)^{|X|-s(X)}.
	\]
\end{df}

\begin{thm}\label{Thm:DualSupp}
	Let $D = (E,s,t)$ be a demi-matroid. 
    Let $f \in \Harm_{d}(n)$. 
	Then we have
	\begin{itemize}
		\item [(i)] 
		${T(D^\ast,f;x,y)} = (-1)^{d}T(D,f;y,x)$, and
		\item [(ii)]
		$T(\overline{D},f;x,y) = (-1)^{d}(x-1)^{s(E)} (y-1)^{t(E)} T(D,f;\dfrac{x}{x-1},\dfrac{y}{y-1})$.
	\end{itemize} 
\end{thm}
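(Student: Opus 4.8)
The plan is to prove both parts by direct manipulation of the defining sum for the harmonic Tutte polynomial, using the duality relations for demi-matroids recorded earlier in the excerpt together with the key harmonic identity of Remark~\ref{Rem:BachocLem}, namely $\widetilde{f}(E-X) = (-1)^d \widetilde{f}(X)$ for $f \in \Harm_d(n)$. For part (i), I would start from
\[
T(D^\ast,f;x,y) = \sum_{X\subseteq E}\widetilde{f}(X)(x-1)^{t(E)-t(X)}(y-1)^{|X|-t(X)},
\]
since $D^\ast = (E,t,s)$. The goal is to recognize this as $(-1)^d T(D,f;y,x)$, i.e.\ $(-1)^d\sum_{X}\widetilde{f}(X)(y-1)^{s(E)-s(X)}(x-1)^{|X|-s(X)}$. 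The natural move is the substitution $X \mapsto E-X$ in one of the sums: using (D2) one has $|E-X|-s(E-X) = t(E)-t(X)$ and hence also, reading the same axiom with roles adjusted, $|X|-t(X)$ relates to $s(E)-s(E-X)$ via (D3). So after replacing $X$ by $E-X$ in the expression for $T(D^\ast,f;x,y)$, the exponent $t(E)-t(E-X)$ becomes $|X|-s(X)$ (by (D2) applied to $E-X$, reading it backwards), and $|E-X|-t(E-X)$ becomes $s(E)-s(X)$ (by (D3)); the factor $\widetilde{f}(E-X)$ contributes the $(-1)^d$. Matching this against the target gives (i). The only care needed is bookkeeping of which axiom, (D2) or (D3), converts which exponent, and verifying both exponents transform simultaneously under the single substitution $X\mapsto E-X$.

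For part (ii), I would work from $\overline{D} = (E,\overline{s},\overline{t})$ with $\overline{s}(X) = s(E)-s(E-X)$ and $\overline{t}(X) = t(E)-t(E-X)$, so that
\[
T(\overline{D},f;x,y) = \sum_{X\subseteq E}\widetilde{f}(X)(x-1)^{\overline{s}(E)-\overline{s}(X)}(y-1)^{|X|-\overline{s}(X)}.
\]
Here $\overline{s}(E)-\overline{s}(X) = s(E-X)$ directly, and $|X|-\overline{s}(X) = |X| - s(E) + s(E-X)$. Again substituting $X\mapsto E-X$ turns $s(E-X)$ into $s(X)$ and $|X|-s(E)+s(E-X)$ into $|E-X|-s(E)+s(X) = |E|-|X|-s(E)+s(X)$; using (D3), $|E-X|-t(E-X) = s(E)-s(X)$ can be reorganized, but more cleanly I expect to write $|E-X| - \overline{s}(E-X)$ in terms of $t$ via the fact that $\overline{D}$ is itself a demi-matroid, so $|E-(E-X)|-\overline{s}(E-X) = \overline{t}(E)-\overline{t}(E-X)$, i.e.\ $|X|-s(X) = t(E-X)$ after unwinding definitions. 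Then the exponents become $s(X)$ and $t(X)$ respectively (up to the relation $|X|-s(X)=t(X)$ only when... — here I must be careful: in general $|X|-\overline{s}(X)$ after the substitution should land on something like $t(X)$, and I would verify this using (D2)/(D3) for $\overline{D}$). Pulling out $(x-1)^{s(E)}(y-1)^{t(E)}$ and rewriting $(x-1)^{-s(X)} = \big(\tfrac{x-1}{1}\big)^{-s(X)}$ etc., one sees $(x-1)^{s(E)-s(X)}$ paired with the substitution $x\mapsto \tfrac{x}{x-1}$, since $\tfrac{x}{x-1}-1 = \tfrac{1}{x-1}$; similarly for $y$. The $(-1)^d$ comes, as before, from $\widetilde{f}(E-X) = (-1)^d\widetilde{f}(X)$.

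The main obstacle I anticipate is the exponent bookkeeping in part (ii): there are two "dualizing" operations stacked (the supplement, plus the substitution $X\mapsto E-X$), and one must track four exponents ($\overline{s}(E)-\overline{s}(X)$, $|X|-\overline{s}(X)$, and their images) through the relations (D1)--(D3) applied sometimes to $D$ and sometimes to $\overline{D}$. A clean way to organize it is to note that $T(\overline{D},f;x,y)$ and $T(D^\ast,f;x,y)$ are related through part (i) applied to $\overline{D}$ together with $\overline{D^\ast} = (\overline{D})^\ast$ (stated in the excerpt), which may let me derive (ii) from (i) plus a single generating-function substitution rather than redoing the sum from scratch; I would check whether that shortcut closes the argument and, if so, present it that way. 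Either route, the harmonic input is confined to the single sign identity $\widetilde{f}(E-X)=(-1)^d\widetilde{f}(X)$, and everything else is the demi-matroid axiom arithmetic already assembled in Section~\ref{Sec:Preli}.
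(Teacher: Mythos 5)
Your proposal is correct and takes essentially the same route as the paper: both parts are proved by direct manipulation of the defining sum, using the substitution $X\mapsto E-X$, the sign identity $\widetilde{f}(E-X)=(-1)^{d}\widetilde{f}(X)$ from Remark~\ref{Rem:BachocLem}, and the axioms (D2)/(D3) to convert exponents (the paper merely applies the axioms before performing the substitution rather than after). The one identity you hedged on in part (ii) is slightly garbled as written --- the relevant exponent after the substitution is $|E-X|-s(E)+s(X)$, which equals $t(E-X)$ by (D3), not $|X|-s(X)$ --- but the factor-out of $(x-1)^{s(E)}(y-1)^{t(E)}$ then closes exactly as you describe, since $t(E-X)-t(E)=-(|X|-s(X))$ by (D2) and $\tfrac{x}{x-1}-1=\tfrac{1}{x-1}$.
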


\begin{proof}
	(i) Since $D = (E,s,t)$ is a demi-matroid, 
	the dual of $D$ is ${D^{\ast} = (E,t,s)}$. 
	Now using Condition~(D2) and Remark~\ref{Rem:BachocLem}, 
	we can write
	\begin{align*}
		{T(D^\ast,f;x,y)}
		& =
		\sum_{X \subseteq E}
		\widetilde{f}(X)
		(x-1)^{t(E)-t(X)}
		(y-1)^{|X|-t(X)}\\
		& =
		\sum_{X \subseteq E}
		\widetilde{f}(X)
		(x-1)^{|E-X|-s(E-X)}
		(y-1)^{s(E)-s(E-X)}\\
		& =
		(-1)^d
		\sum_{X \subseteq E}
		\widetilde{f}(X)
		(y-1)^{s(E)-s(X)}
		(x-1)^{|X|-s(X)}\\
		& =
		(-1)^{d}
		T(D,f;y,x).
	\end{align*}
	(ii) Since $D = (E,s,t)$ is a demi-matroid, 
	it follows that $\overline{D} = (E,\overline{s}, \overline{t})$. 
	Therefore using Condition~(D2), (D3) and Remark~\ref{Rem:BachocLem}, 
	we can write
        \begingroup
        \allowdisplaybreaks
	\begin{align*}
		T(\overline{D},f;x,y)
		& =
		\sum_{X \subseteq E}
		\widetilde{f}(X)
		(x-1)^{\overline{s}(E)-\overline{s}(X)}
		(y-1)^{|X|-\overline{s}(X)}\\
		& =
		\sum_{X \subseteq E}
		\widetilde{f}(X)
		(x-1)^{{s}(E-X)}
		(y-1)^{|X|-s(E)+{s}(E-X)}\\
		& =
		(-1)^{d}
		\sum_{X \subseteq E}
		\widetilde{f}(X)
		(x-1)^{{s}(X)}
		(y-1)^{|E-X|-s(E)+{s}(X)}\\
		& =
		(-1)^{d}
		\sum_{X \subseteq E}
		\widetilde{f}(X)
		(x-1)^{{s}(X)}
		(y-1)^{t(E-X)}\\
		& =
		(-1)^{d}
		(x-1)^{s(E)}
		(y-1)^{t(E)}
		\sum_{X \subseteq E}
		\widetilde{f}(X)
		(x-1)^{-s(E)+{s}(X)}
		(y-1)^{-t(E)+t(E-X)}\\
		& =
		(-1)^{d}
		(x-1)^{s(E)}
		(y-1)^{t(E)}
		\sum_{X \subseteq E}
		\widetilde{f}(X)
		(x-1)^{-s(E)+{s}(X)}
		(y-1)^{-|X|+s(X)}\\
		& =
		(-1)^{d}
		(x-1)^{s(E)}
		(y-1)^{t(E)}
		T(D,f;\frac{x}{x-1},\frac{y}{y-1}).
	\end{align*}
        \endgroup
	This completes the proof.
\end{proof}

\begin{df}
	Let $D = (E,s,t)$ be a demi-matroid and $f$ be a harmonic function with degree~$d$.
	Then the \emph{harmonic coboundary polynomial} of $D$ associated to $f$ is
	defined as follows:
	\[
		W_{D,f}(\lambda,x,y)
		:=
		\sum_{T\subseteq E}
		\widetilde{f}(T)
		\chi(D.T ;\lambda)
		x^{|E-T|} y^{|T|},
	\]
	where 
	$\chi(D;\lambda) := \sum_{X \subseteq E} (-1)^{|X|} \lambda^{s(E)-s(X)}$
	is the \emph{characteristic polynomial} of $D$, and 
	$D.T := D/(E-T)$.
\end{df}

\begin{rem}
	$\chi(D.\emptyset; \lambda) = 1$.
\end{rem}

\begin{rem}\label{Rem:ZDf}
	Let $W_{D,f}(\lambda,x,y)$ be the harmonic coboundary polynomial of a 
	demi-matroid $D$ on $E$ associated to $f \in \Harm_{d}(n)$. 
	Then Lemma~\ref{Lem:Bachoc} and Remark~\ref{Rem:BachocLem} imply that
	\[
		W_{D,f}(\lambda,x,y) = (xy)^{d} Z_{D,f}(\lambda,x,y),
	\]
	where $Z_{D,f}(\lambda,x,y)$ is a  homogeneous polynomial of degree $n-2d$.
\end{rem}

\begin{thm}\label{Thm:AlterCoboundary}
	$Z_{D,f}(\lambda,x,y) = (-1)^{d}\sum\limits_{\substack{T\subseteq E,\\d \leq |T|\leq n-d}} \widetilde{f}(T)\lambda^{s(E)-s(T)} (x-y)^{|T|-d} y^{|E-T|-d}$.
\end{thm}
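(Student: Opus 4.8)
The plan is to unfold the definition of $W_{D,f}(\lambda,x,y)$, expand the characteristic polynomial $\chi(D.T;\lambda)$ as a sum over subsets, and then interchange the order of summation so as to collect terms according to a single subset variable. First I would write
\[
W_{D,f}(\lambda,x,y)
=
\sum_{T\subseteq E}
\widetilde{f}(T)
\Bigl(\sum_{Y\subseteq T}(-1)^{|Y|}\lambda^{\widehat{s}(T)-\widehat{s}(Y)}\Bigr)
x^{|E-T|}y^{|T|},
\]
where $\widehat{s}$ is the contraction rank, so that $\widehat{s}(Z) = s((E-T)\cup Z) - s(E-T)$ for $Z\subseteq T$; here I am using $D.T = D/(E-T)$ and the definition of the contraction from Section~\ref{Sec:Preli}. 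In particular $\widehat{s}(T)-\widehat{s}(Y) = s(E) - s((E-T)\cup Y)$, so after substituting $X := (E-T)\cup Y$ (equivalently $Y = X\cap T$, with $E-T\subseteq X$) the exponent of $\lambda$ becomes $s(E)-s(X)$. The key step is then to swap the two sums: fix $X\subseteq E$ and sum over all $T$ with $E-X\subseteq T$ (i.e.\ $T \supseteq E-X$), writing $T = (E-X)\cup V$ for $V\subseteq X$. This gives a factor $(-1)^{|X\cap T|}x^{|E-T|}y^{|T|}$ to be summed over $V\subseteq X$.

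Next I would carry out the inner sum over $V\subseteq X$. Writing $T=(E-X)\cup V$ we have $|E-T| = |X|-|V|$, $|T| = |E-X|+|V|$, and $X\cap T = (X\cap(E-X))\cup(X\cap V) = V$, so $(-1)^{|X\cap T|} = (-1)^{|V|}$. Hence the inner sum is
\[
\sum_{V\subseteq X}(-1)^{|V|}x^{|X|-|V|}y^{|E-X|+|V|}
=
y^{|E-X|}\sum_{i=0}^{|X|}\binom{|X|}{i}x^{|X|-i}(-y)^{i}
=
y^{|E-X|}(x-y)^{|X|}.
\]
Collecting everything, $W_{D,f}(\lambda,x,y) = \sum_{X\subseteq E}\widetilde{f}(X)\,\lambda^{s(E)-s(X)}(x-y)^{|X|}y^{|E-X|}$. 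Now I would invoke Remark~\ref{Rem:BachocLem}: $\widetilde{f}(X)=0$ unless $d\le|X|\le n-d$, which restricts the sum and simultaneously guarantees that each surviving term is divisible by $(xy)^d$ — indeed $(x-y)^{|X|}y^{|E-X|}$ carries a factor $y^{|E-X|}$ with $|E-X|\ge d$, and to extract the matching factor $x^d$ one uses the symmetry $\widetilde{f}(X) = (-1)^d\widetilde{f}(E-X)$ from Remark~\ref{Rem:BachocLem} to rewrite the sum. Concretely, pairing $X$ with $E-X$ and using $\widetilde{f}(E-X)=(-1)^d\widetilde{f}(X)$, one obtains
\[
W_{D,f}(\lambda,x,y)
=
(-1)^{d}
\sum_{\substack{X\subseteq E,\\ d\le|X|\le n-d}}
\widetilde{f}(X)\,\lambda^{s(E)-s(X)}(x-y)^{|E-X|}y^{|X|},
\]
where each term now manifestly contains $(xy)^d$ as a factor: $y^{|X|}$ with $|X|\ge d$ and $(x-y)^{|E-X|}$ with $|E-X|\ge d$. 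Factoring out $(xy)^d$ and comparing with the definition $W_{D,f} = (xy)^d Z_{D,f}$ from Remark~\ref{Rem:ZDf} yields exactly the claimed formula for $Z_{D,f}(\lambda,x,y)$.

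The main obstacle is bookkeeping the change of variables cleanly — in particular verifying that $\widehat{s}(T)-\widehat{s}(X\cap T) = s(E)-s(X)$ under the identification $X = (E-T)\cup(X\cap T)$, which requires being careful that this identification is a genuine bijection between pairs $(T, Y\subseteq T)$ and pairs $(X, T\supseteq E-X)$, and then the parallel subtlety of extracting $(xy)^d$ using the harmonic symmetry rather than naive divisibility (the factor $x^d$ is not visibly present before applying Remark~\ref{Rem:BachocLem}). Once those two points are handled, the remaining computation is the routine binomial identity $\sum_i\binom{|X|}{i}x^{|X|-i}(-y)^i = (x-y)^{|X|}$ and relabelling.
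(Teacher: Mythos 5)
Your overall strategy (expand $\chi(D.T;\lambda)$, reindex by $X=(E-T)\cup Y$, swap the two sums, apply the binomial theorem) is the right skeleton and is the same as the paper's, but there is a genuine gap at the swap. After interchanging the order of summation, the factor $\widetilde{f}(T)$ is evaluated at the \emph{inner} variable $T=(E-X)\cup V$, which varies with $V$; it cannot be pulled out of the inner sum. Your computation
\[
\sum_{V\subseteq X}(-1)^{|V|}x^{|X|-|V|}y^{|E-X|+|V|}=y^{|E-X|}(x-y)^{|X|}
\]
silently discards the factor $\widetilde{f}((E-X)\cup V)$, and then $\widetilde{f}(X)$ reappears attached to the outer variable without justification (this is harmless only when $d=0$, where $\widetilde{f}$ is constant). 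Handling that factor is exactly the non-trivial content of the theorem: one must expand $\widetilde{f}((E-X)\cup V)=\sum_{Z\in E_d,\ Z\cap X\subseteq V}f(Z)$, group the $Z$'s according to $j=|Z\cap X|$, count the admissible $V$'s of each size, and invoke Lemma~\ref{Lem:Bachoc} (namely $f^{(j)}(X)=(-1)^{d-j}\binom{d}{j}\widetilde{f}(X)$) together with a binomial identity of the kind recorded after the proof of Theorem~\ref{Thm:NewZ}. That is precisely where the sign $(-1)^d$ and the degree drop from $|X|$ to $|X|-d$ come from; the proof of Theorem~\ref{Thm:NewZ} carries out this exact manoeuvre, and the present theorem is the same computation with $B_C^{[m]}(\cdot)$ replaced by $\lambda^{s(E)-s(\cdot)}$.

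Your substitute for this step --- producing the $(-1)^d$ at the end by pairing $X$ with $E-X$ via $\widetilde{f}(E-X)=(-1)^d\widetilde{f}(X)$ --- does not work: the substitution $X\mapsto E-X$ also turns $\lambda^{s(E)-s(X)}$ into $\lambda^{s(E)-s(E-X)}$, and $s(E-X)\neq s(X)$ in general, yet your displayed result keeps $\lambda^{s(E)-s(X)}$. Moreover the expression you end with, $(-1)^d\sum_X\widetilde{f}(X)\lambda^{s(E)-s(X)}(x-y)^{|E-X|}y^{|X|}$, is not $(xy)^d$ times the claimed formula (the exponents of $x-y$ and of $y$ are attached to the wrong sets), and it is not divisible by $x^d$ at all, since $(x-y)^{|E-X|}$ carries no factor of $x$; so the final ``factor out $(xy)^d$'' step cannot be performed on it. The divisibility by $(xy)^d$ is itself a consequence of Lemma~\ref{Lem:Bachoc} (cf.\ Remark~\ref{Rem:ZDf}), not something visible term by term before that lemma is applied.
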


\begin{proof}
	From Remark~\ref{Rem:ZDf} we have
	\begin{align*}
		Z_{D,f}(\lambda,x,y)
		& =
		\sum_{\substack{T\subseteq E,\\d \leq |T|\leq n-d}}
		\widetilde{f}(T)
		\chi(D.T ;\lambda)
		x^{|E-T|-d} y^{|T|-d}\\
		& =
		\sum_{\substack{T\subseteq E,\\d \leq |T|\leq n-d}}
		\widetilde{f}(T)
		\left(
		\sum_{X \subseteq T} 
		(-1)^{|X|} 
		\lambda^{\widehat{s}(T)-\widehat{s}(X)}
		\right)
		x^{|E-T|-d} y^{|T|-d}\\
		& =
		\sum_{\substack{T\subseteq E,\\d \leq |T|\leq n-d}}
		\widetilde{f}(T)
		\left(
		\sum_{X \subseteq T} 
		(-1)^{|X|} 
		\lambda^{s(E)-s(X\cup(E-T))}
		\right)
		x^{|E-T|-d} y^{|T|-d}.		
	\end{align*}
	Substituting $X \cup (E-T)$ by $T$ in the first sum, we obtain
	\begin{align*}
		Z_{D,f}(\lambda,x,y)
		& =
		(-1)^{d}
		\sum_{\substack{T\subseteq E,\\d \leq |T|\leq n-d}}
		\widetilde{f}(T)
		\lambda^{s(E)-s(T)}
		y^{|E-T|-d}
		\left(
		\sum_{\substack{X \subseteq T,\\ 0 \leq |X| \leq |T|-d}} 
		x^{(|T|-d)-|X|}
		(-y)^{|X|}
		\right)\\
		& =
		(-1)^{d}
		\sum_{\substack{T\subseteq E,\\d \leq |T|\leq n-d}}
		\widetilde{f}(T)
		\lambda^{s(E)-s(T)}
		y^{|E-T|-d}
		\left(
		\sum_{i = 0}^{|T|-d}
		\binom{|T|-d}{i}
		x^{(|T|-d)-i}
		(-y)^{i}
		\right)\\
		& =
		(-1)^{d}
		\sum_{\substack{T\subseteq E,\\d \leq |T|\leq n-d}}
		\widetilde{f}(T)
		\lambda^{s(E)-s(T)}
		y^{|E-T|-d}
		(x-y)^{|T|-d}.	
	\end{align*}
	This completes the proof.
\end{proof}

The following proposition via Theorem~\ref{Thm:AlterCoboundary} shows that 
the harmonic coboundary polynomial is equivalent to the harmonic Tutte polynomial.

\begin{prop}
	For a demi-matroid $D = (E,s,t)$,
	\[
		(-1)^{d}
		(y-1)^{s(E)-d}
		T(D,f;x,y)
		=
		Z_{D,f}((x-1)(y-1),y,1).
	\]
\end{prop}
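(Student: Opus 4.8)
The plan is to substitute directly into the closed form for $Z_{D,f}$ furnished by Theorem~\ref{Thm:AlterCoboundary} and then recognize the result as a rescaled harmonic Tutte polynomial. First I would put $\lambda = (x-1)(y-1)$, replace the first polynomial variable of $Z_{D,f}$ by $y$ and the second by $1$; since $1^{|E-T|-d}=1$ for every $T$ occurring in the sum, the factor $y^{|E-T|-d}$ disappears, giving
\[
Z_{D,f}\big((x-1)(y-1),y,1\big)
=
(-1)^{d}\sum_{\substack{T\subseteq E,\\ d \le |T| \le n-d}}
\widetilde{f}(T)\,\big((x-1)(y-1)\big)^{s(E)-s(T)}\,(y-1)^{|T|-d}.
\]

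Next I would split $\big((x-1)(y-1)\big)^{s(E)-s(T)}$ into $(x-1)^{s(E)-s(T)}(y-1)^{s(E)-s(T)}$ and merge the two powers of $(y-1)$, using the identity $s(E)-s(T)+|T|-d = \big(s(E)-d\big) + \big(|T|-s(T)\big)$ to pull out the $T$-independent factor $(y-1)^{s(E)-d}$. This rewrites the right-hand side as
\[
(-1)^{d}(y-1)^{s(E)-d}\sum_{\substack{T\subseteq E,\\ d \le |T| \le n-d}}
\widetilde{f}(T)\,(x-1)^{s(E)-s(T)}\,(y-1)^{|T|-s(T)}.
\]

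Finally I would enlarge the summation range to all $T\subseteq E$: by Remark~\ref{Rem:BachocLem}, $\widetilde{f}(T)=0$ whenever $|T|<d$ or $|T|>n-d$, so the terms outside $d \le |T| \le n-d$ contribute nothing, and the resulting sum is precisely $T(D,f;x,y)$ by the definition of the harmonic Tutte polynomial; rearranging yields the claimed identity. I do not anticipate a genuine obstacle here — the only care needed is in the exponent bookkeeping when combining the $(y-1)$ factors and in explicitly invoking the vanishing of $\widetilde{f}$ outside degrees $d,\dots,n-d$ to justify reinstating the full index set, both of which are immediate from results already established above.
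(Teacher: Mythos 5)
Your proposal is correct and follows exactly the route the paper intends: the paper states this proposition as an immediate consequence of Theorem~\ref{Thm:AlterCoboundary} (without writing out the details), and your substitution $\lambda=(x-1)(y-1)$, $x\mapsto y$, $y\mapsto 1$, the exponent regrouping $s(E)-s(T)+|T|-d=(s(E)-d)+(|T|-s(T))$, and the appeal to Remark~\ref{Rem:BachocLem} to restore the full index set are precisely the missing computation. No gaps.
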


The harmonic coboundary polynomial translation of Theorem~\ref{Thm:DualSupp} is 
as follows. Since the proof of the theorem is straightforward, we omit it.

\begin{thm}\label{Thm:CoboundaryDualSupp}
	Let $D = (E,s,t)$ be a demi-matroid. Let $f \in \Harm_{d}(n)$. 
	Then we have
	\begin{itemize}
		\item [(i)] 
		$\lambda^{s(E)-d} Z_{D^\ast,f}(\lambda,x,y) = (-1)^{d} Z_{D,f}(\lambda, x+(\lambda-1)y, x-y)$,
		\item [(ii)]
		$\lambda^{-s(E)} Z_{\overline{D},f}(\lambda,x,y) = (-1)^{d} Z_{D,f}(1/\lambda,x,x-y)$.
	\end{itemize} 
\end{thm}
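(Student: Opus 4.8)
The plan is to reduce both identities to the closed form for $Z_{D,f}(\lambda,x,y)$ furnished by Theorem~\ref{Thm:AlterCoboundary}, applied once to $D$ itself and once to the demi-matroid $D^{\ast}$ (for part (i)) or $\overline{D}$ (for part (ii)); recall that $D^{\ast}$ and $\overline{D}$ are themselves demi-matroids, so Theorem~\ref{Thm:AlterCoboundary} is available for them (and $D^{\ast}.T$, $\overline{D}.T$ are demi-matroids by Theorem~\ref{Thm:Contraction}, so the coboundary polynomials involved are well defined). The only additional ingredients needed are the involution $T \mapsto E-T$ on subsets of $E$, under which $\widetilde{f}(T) = (-1)^{d}\widetilde{f}(E-T)$ by Remark~\ref{Rem:BachocLem}, and the demi-matroid axiom (D2). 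Since $\widetilde{f}$ vanishes outside the range $d \le |T| \le n-d$, which is symmetric under $T \mapsto E-T$, sums over that range may be freely extended to sums over all $T \subseteq E$.

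For part (i): as $D^{\ast} = (E,t,s)$, Theorem~\ref{Thm:AlterCoboundary} gives $Z_{D^{\ast},f}(\lambda,x,y)$ as the same sum as $Z_{D,f}$ but with $\lambda$-exponent $t(E)-t(T)$ in place of $s(E)-s(T)$. I would use (D2) to write $t(E)-t(T)=|E-T|-s(E-T)$ and then substitute $T \mapsto E-T$ --- which contributes one factor $(-1)^{d}$ through $\widetilde{f}$ and swaps the exponents in $(x-y)^{|T|-d}$ and $y^{|E-T|-d}$ --- to obtain $Z_{D^{\ast},f}(\lambda,x,y) = \sum_{T}\widetilde{f}(T)\,\lambda^{|T|-s(T)}(x-y)^{|E-T|-d}y^{|T|-d}$. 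On the other side, substituting $x \mapsto x+(\lambda-1)y$ and $y \mapsto x-y$ in Theorem~\ref{Thm:AlterCoboundary} turns $(x-y)^{|T|-d}$ into $(\lambda y)^{|T|-d}$ and $y^{|E-T|-d}$ into $(x-y)^{|E-T|-d}$, so $Z_{D,f}(\lambda,x+(\lambda-1)y,x-y) = (-1)^{d}\sum_{T}\widetilde{f}(T)\,\lambda^{s(E)-s(T)+|T|-d}y^{|T|-d}(x-y)^{|E-T|-d}$. Multiplying the first expression by $\lambda^{s(E)-d}$ makes the $\lambda$-exponents agree, and a termwise comparison yields (i).

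For part (ii): here $\overline{D}=(E,\overline{s},\overline{t})$ with $\overline{s}(E)=s(E)$ and $\overline{s}(T)=s(E)-s(E-T)$, so the $\lambda$-exponent $\overline{s}(E)-\overline{s}(T)$ equals $s(E-T)$; substituting $T \mapsto E-T$ as above rewrites $Z_{\overline{D},f}(\lambda,x,y)$ as $\sum_{T}\widetilde{f}(T)\,\lambda^{s(T)}(x-y)^{|E-T|-d}y^{|T|-d}$. On the other side, substituting $\lambda \mapsto 1/\lambda$ and $y \mapsto x-y$ in Theorem~\ref{Thm:AlterCoboundary} turns $(x-y)^{|T|-d}$ into $y^{|T|-d}$, $y^{|E-T|-d}$ into $(x-y)^{|E-T|-d}$, and $\lambda^{s(E)-s(T)}$ into $\lambda^{s(T)-s(E)}$, so $Z_{D,f}(1/\lambda,x,x-y) = (-1)^{d}\sum_{T}\widetilde{f}(T)\,\lambda^{s(T)-s(E)}y^{|T|-d}(x-y)^{|E-T|-d}$. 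Dividing the first expression by $\lambda^{s(E)}$ identifies it with $(-1)^{d}$ times the second, which is (ii).

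I do not anticipate a real obstacle, since the whole argument is bookkeeping with exponents and signs. The delicate point is keeping track of the powers of $-1$: one $(-1)^{d}$ from Theorem~\ref{Thm:AlterCoboundary} applied to the dual or supplement, one from each use of $\widetilde{f}(T)=(-1)^{d}\widetilde{f}(E-T)$, and the prefactor $(-1)^{d}$ on the claimed right-hand sides, must combine to exactly the stated sign; the other thing to watch is that the substitution $T \mapsto E-T$ interchanges the exponents $|T|-d$ and $|E-T|-d$ consistently on both sides of each identity.
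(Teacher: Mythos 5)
Your proof is correct: both identities follow from Theorem~\ref{Thm:AlterCoboundary} applied to $D^{\ast}$ and $\overline{D}$ together with axiom (D2) and the relation $\widetilde{f}(T)=(-1)^{d}\widetilde{f}(E-T)$ under the substitution $T\mapsto E-T$, and your sign and exponent bookkeeping checks out. The paper omits this proof as straightforward, but your argument is precisely the coboundary-polynomial translation of the proof it gives for Theorem~\ref{Thm:DualSupp}, so the approach is essentially the same.
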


The harmonic generalization of Greene's Theorem was given in~\cite[Theorem 4.1]{CMO20xx}. 
In the following theorem, we give the demi-matroid analogue of~\cite[Theorem 4.1]{CMO20xx} 
that gives the representation of 
the $m$-tuple harmonic weight enumerators of linear codes over finite Frobenius ring 
in terms of harmonic coboundary polynomials of demi-matroids.

\begin{thm}[Greene-type identity]\label{Thm:Greene}
	Let $R$ be a finite Frobenius ring. 
	Let $D_{C} =(E,\alpha_{C},\beta_{C})$ be the demi-matroid corresponding
	to left (or right) $R$-linear code~$C$ of length~$n$. Let $f \in \Harm_{d}(n)$. 
	Then we have the following relation:
	\[
		Z_{C,f}^{[m]}(x,y) 
		= 
		Z_{D_{C},f}(|R|^{m},x,y).
	\]
\end{thm}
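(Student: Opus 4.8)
The plan is to prove the Greene-type identity by showing that both sides are, in fact, the same explicit polynomial in the fixed elementary datum $\widetilde{f}(X)$, $|X|$, and the numbers $\alpha_C(X)$. The key observation is that Theorem~\ref{Thm:NewZ} already gives a closed form for the left-hand side $Z_{C,f}^{[m]}(x,y)$, while Theorem~\ref{Thm:AlterCoboundary} gives a closed form for $Z_{D,f}(\lambda,x,y)$ for an arbitrary demi-matroid $D=(E,s,t)$. So the entire proof reduces to specialising the latter formula at $D=D_C$ and $\lambda=|R|^m$ and checking it matches the former term by term.

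First I would write down, from Theorem~\ref{Thm:NewZ},
\[
	Z_{C,f}^{[m]}(x,y)
	=
	(-1)^{d}
	\sum_{\substack{X\subseteq E,\\ d\le|X|\le n-d}}
	\widetilde{f}(X)\,
	(|R|^{m})^{\alpha_{C}(E)-\alpha_{C}(X)}\,
	(x-y)^{|X|-d}\,y^{|E-X|-d}.
\]
Next I would apply Theorem~\ref{Thm:AlterCoboundary} to the demi-matroid $D_C=(E,\alpha_C,\beta_C)$, so that $s=\alpha_C$ and $s(E)=\alpha_C(E)$; this yields
\[
	Z_{D_{C},f}(\lambda,x,y)
	=
	(-1)^{d}
	\sum_{\substack{X\subseteq E,\\ d\le|X|\le n-d}}
	\widetilde{f}(X)\,
	\lambda^{\alpha_{C}(E)-\alpha_{C}(X)}\,
	(x-y)^{|X|-d}\,y^{|E-X|-d}.
\]
Substituting $\lambda=|R|^m$ into the second display gives the first display verbatim, which is exactly the claimed identity $Z_{C,f}^{[m]}(x,y)=Z_{D_{C},f}(|R|^m,x,y)$. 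The only point requiring a word of justification is that $D_C=(E,\alpha_C,\beta_C)$ is indeed a demi-matroid, so that Theorem~\ref{Thm:AlterCoboundary} applies — but this is guaranteed by Theorem~\ref{Thm:CharFrobRingCodes}, since $R$ is Frobenius.

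I do not expect any serious obstacle here: both constituent formulas are already established in the excerpt, and the match is immediate once $\lambda$ is set to $|R|^m$. If one wanted a more self-contained argument avoiding Theorem~\ref{Thm:NewZ}, the alternative would be to start from the defining sums — $Z_{C,f}^{[m]}$ built from $A_C^{[m]}(X)$ and $Z_{D_C,f}$ built from the characteristic polynomials $\chi(D_C.T;\lambda)$ — and run a Möbius/inclusion–exclusion computation using Lemma~\ref{Lem:InEx} together with the identity $B_C^{[m]}(X)=|C/(E-X)|^m=(|R|^m)^{\gamma_C(E-X)}$ and the relation $\gamma_C(E-X)=\alpha_C(E)-\alpha_C(X)$; the characteristic polynomial of the contraction $D_C.T$ would likewise expand, via the substitution $X\cup(E-T)\mapsto T$ used in the proof of Theorem~\ref{Thm:AlterCoboundary}, into powers of $\lambda^{\alpha_C(E)-\alpha_C(\cdot)}$. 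But since the excerpt already packages exactly these computations into Theorems~\ref{Thm:NewZ} and~\ref{Thm:AlterCoboundary}, the clean route is simply to cite them and observe the two closed forms coincide.
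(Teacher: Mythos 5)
Your proposal is correct and follows exactly the paper's argument: the paper's proof of Theorem~\ref{Thm:Greene} is precisely the observation that the closed forms in Theorems~\ref{Thm:NewZ} and~\ref{Thm:AlterCoboundary} coincide upon setting $D=D_{C}$ (so $s=\alpha_{C}$) and $\lambda=|R|^{m}$. Your additional remark that Theorem~\ref{Thm:CharFrobRingCodes} guarantees $D_{C}$ is a demi-matroid is a sensible bit of bookkeeping that the paper leaves implicit.
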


\begin{proof}
	Theorem~\ref{Thm:NewZ} and Theorem~\ref{Thm:AlterCoboundary} completes the proof.
\end{proof}

Greene's Theorem is more familiar in terms of Tutte polynomials. So, it is natural to restate the above Greene-type identity in terms of the harmonic Tutte polynomials of demi-matroids.

\begin{cor}\label{Cor:Greene}
	\[
		Z_{C,f}^{[m]}(x,y)
		=
		(-1)^{d}
		(x-y)^{\alpha_{C}(E)-d}
		y^{n-\alpha_{C}(E)-d}
		T
		\left(
		D_{C},f;
		\dfrac{x+(|R|^{m}-1)y}{x-y},
		\dfrac{x}{y}
		\right).
	\]
\end{cor}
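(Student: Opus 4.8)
The plan is to derive Corollary~\ref{Cor:Greene} directly from Theorem~\ref{Thm:Greene} by expressing the harmonic coboundary polynomial $Z_{D_C,f}(|R|^m,x,y)$ in terms of the harmonic Tutte polynomial $T(D_C,f;\cdot,\cdot)$. Since Theorem~\ref{Thm:Greene} already gives $Z_{C,f}^{[m]}(x,y)=Z_{D_C,f}(|R|^m,x,y)$, it suffices to prove the identity
\[
Z_{D,f}(\lambda,x,y)
=
(-1)^{d}(x-y)^{s(E)-d}\,y^{n-s(E)-d}\,
T\!\left(D,f;\frac{x+(\lambda-1)y}{x-y},\frac{x}{y}\right)
\]
for an arbitrary demi-matroid $D=(E,s,t)$ with $s(E)=\alpha_C(E)$, and then specialize $\lambda=|R|^m$, $s=\alpha_C$.

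First I would start from the closed form in Theorem~\ref{Thm:AlterCoboundary},
\[
Z_{D,f}(\lambda,x,y)
=(-1)^{d}\!\!\sum_{\substack{T\subseteq E,\\ d\le|T|\le n-d}}\!\!
\widetilde{f}(T)\,\lambda^{s(E)-s(T)}(x-y)^{|T|-d}y^{|E-T|-d},
\]
and separately expand the right-hand side of the claimed identity using the definition of $T(D,f;x,y)$ evaluated at the arguments $x'=\frac{x+(\lambda-1)y}{x-y}$ and $y'=\frac{x}{y}$. The key arithmetic facts are $x'-1=\frac{\lambda y}{x-y}$ and $y'-1=\frac{x-y}{y}$, so that
\[
(x'-1)^{s(E)-s(X)}(y'-1)^{|X|-s(X)}
=\lambda^{s(E)-s(X)}(x-y)^{-s(E)+s(X)}\,(x-y)^{|X|-s(X)}y^{-|X|+s(X)}
=\lambda^{s(E)-s(X)}(x-y)^{|X|-s(E)}y^{s(X)-|X|}.
\]
Multiplying by the prefactor $(-1)^d(x-y)^{s(E)-d}y^{n-s(E)-d}$ collapses the powers of $(x-y)$ to $(x-y)^{|X|-d}$ and the powers of $y$ to $y^{n-|X|-s(E)+s(X)}$; using $n-|X|=|E-X|$ this matches the summand of $Z_{D,f}$ term by term, and the restriction to $d\le|X|\le n-d$ is harmless because $\widetilde{f}(X)=0$ outside that range by Remark~\ref{Rem:BachocLem}.

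The main obstacle — really the only point requiring care — is the bookkeeping of exponents under the substitution, in particular checking that the negative powers of $(x-y)$ and $y$ produced by $T(D,f;x',y')$ are exactly absorbed by the prefactor and that no spurious dependence on $t$ remains (it does not, since the harmonic Tutte polynomial and $Z_{D,f}$ both depend only on $s$). One should also note that the identity is literally an equality of rational functions that becomes a polynomial identity after clearing denominators, so it is legitimate to manipulate $(x-y)^{-1}$ and $y^{-1}$ formally; alternatively, one can invoke the already-stated Proposition relating $T(D,f;x,y)$ to $Z_{D,f}((x-1)(y-1),y,1)$ together with the scaling behaviour of $Z_{D,f}$ in its last two (homogeneous of degree $n-2d$) variables to avoid denominators altogether. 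Either route reduces the corollary to a one-line substitution into Theorem~\ref{Thm:Greene}, so I would present it compactly: state the substitution, record $x'-1$ and $y'-1$, and conclude.
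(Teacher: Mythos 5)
Your route is essentially the paper's: the paper proves the corollary by taking the closed form of $Z_{C,f}^{[m]}(x,y)$ from Theorem~\ref{Thm:NewZ} (which, via Theorem~\ref{Thm:Greene} and Theorem~\ref{Thm:AlterCoboundary}, is literally the same expression as your starting point $Z_{D_C,f}(|R|^m,x,y)$), factoring out $(x-y)^{\alpha_C(E)-d}y^{n-\alpha_C(E)-d}$, and recognizing the harmonic Tutte polynomial at the substituted arguments using exactly the two facts you record, namely $x'-1=\lambda y/(x-y)$ and $y'-1=(x-y)/y$. So the idea is correct and matches the paper's proof.

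There is, however, an arithmetic slip in your displayed computation that, as written, breaks the claimed term-by-term match. From $x'-1=\lambda y/(x-y)$ one gets
\[
(x'-1)^{s(E)-s(X)}
=\lambda^{s(E)-s(X)}\,y^{s(E)-s(X)}\,(x-y)^{s(X)-s(E)},
\]
but you dropped the factor $y^{s(E)-s(X)}$, so your summand ends in $y^{s(X)-|X|}$ where it should end in $y^{s(E)-|X|}$. With your exponent, multiplying by the prefactor $y^{n-s(E)-d}$ yields $y^{n-|X|-d+s(X)-s(E)}$, which equals the required $y^{|E-X|-d}$ only when $s(X)=s(E)$; so the conclusion does not follow from what you wrote. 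Restoring the dropped factor gives $y^{n-s(E)-d}\cdot y^{s(E)-|X|}=y^{|E-X|-d}$, and the powers of $(x-y)$ collapse to $(x-y)^{|X|-d}$ as you say, after which the proof goes through exactly as in the paper. The remaining points you make --- restricting the sum to $d\le|X|\le n-d$ via Remark~\ref{Rem:BachocLem}, and the harmlessness of manipulating $(x-y)^{-1}$ and $y^{-1}$ formally since the identity is one of homogeneous polynomials --- are fine.
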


\begin{proof}
	By Theorem~\ref{Thm:NewZ} 
	and	Remark~\ref{Rem:BachocLem}, we can write
	\begin{align*}
		Z_{C,f}^{[m]}(x,y)
		& = 
		(-1)^{d}
		\sum\limits_{\substack{X \subseteq E,\\ d \le|X|\le n-d}}
		\widetilde{f}(X)
		(|R|^{m})^{\alpha_{C}(E)-\alpha_{C}(X)} 
		(x-y)^{|X|- d} y^{|E-X|-d}\\
		& = 
		(-1)^{d}
		(x-y)^{\alpha_{C}(E)-d}
		y^{|E|-\alpha_{C}(E)-d}\\
		& \quad
		\sum\limits_{\substack{X \subseteq E,\\ d \le|X|\le n-d}}
		\widetilde{f}(X)
		\left(
			\frac{|R|^m y}{x-y}
		\right)^{\alpha_{C}(E)-\alpha_{C}(X)} 
		\left(
			\frac{x-y}{y}
		\right)^{|X|-\alpha_{C}(X)}\\
		& =
		(-1)^{d}
		(x-y)^{\alpha_{C}(E)-d}
		y^{n-\alpha_{C}(E)-d}
		T
		\left(
		D_{C},f;
		\dfrac{x+(|R|^{m}-1)y}{x-y},
		\dfrac{x}{y}
		\right).
	\end{align*}
Hence we have the identity.
\end{proof}



\begin{ex}
	Let 
	$E = \{1,2,3\}$ and
	$f = a\{1\} + b\{2\} - (a+b)\{3\}$
	be a harmonic function of degree $d = 1$.
	Let $C$ be a linear code over $\ZZ_{4}$ with generator matrix
	\[
		\begin{pmatrix}
			1 & 1 & 0\\
			0 & 0 & 3
		\end{pmatrix}.
	\]
	Then by direct calculation, we get the harmonic weight enumerator of~$C$ to be
	\begin{align*}
		W_{C,f}(x,y)
		& =
		-3(a+b)x^2 y 
		+3(a+b)xy^2\\
		& =
		\underset{Z_{C,f}(x,y)}{\underbrace{3(a+b)(y-x)}} xy
	\end{align*}

The harmonic Tutte polynomial and harmonic coboundary polynomial of the demi-matroid $D_{C} = (E,\alpha_{C},\beta_{C})$ are as follows:

\begin{align*}
	T(D_{C},f;x,y)
	& =
	\sum_{X \subseteq E}
	\widetilde{f}(X)
	(x-1)^{\alpha_{C}(E)-\alpha_{C}(X)}
	(y-1)^{|X|-\alpha_{C}(X)}\\
	& =
	(a+b)
	\{
	(x-1)(y-1) -1
	\}.\\
	W_{D_{C},f}(\lambda,x,y)
	& =
	\sum_{X \subseteq E}
	\widetilde{f}(X)
	\chi(D_{C}.X;\lambda)
	x^{|E-X|}
	y^{|X|}\\
	& =
	(a+b) (\lambda-1) xy^2
	-
	(a+b) (\lambda-1) x^2y\\
	& =
	\underset{Z_{C,f}(x,y)}{\underbrace{(a+b)(\lambda-1)(y-x)}} xy .
\end{align*}
It follows that
\[
	(-1)^{1}
	(x-y)^{1}
	y^{0}
	T\left(D_{C},f;\frac{x+3y}{x-y},\frac{x}{y}\right)
	=
	Z_{C,f}(x,y)
	=
	Z_{D_{C},f}(4,x,y),
\]
as in Theorem~\ref{Thm:Greene} and in Corollary~\ref{Cor:Greene}
\end{ex}

As an application of Theorem~\ref{Thm:Greene}, 
we now give {a very} simple alternative proof of the 
MacWilliams-type identity stated in Theorem~\ref{Thm:MacWilliams}.

\begin{proof}[Alternative proof of Theorem~\ref{Thm:MacWilliams}]
	Let $C$ be a left $R$-linear code of length~$n$,
	and $D_{C} = (E,\alpha_{C},\beta_{C})$ be its demi-matroid.
	{Then by Remark 2.5, we have
	$D_{C^{\perp}} = (D_{C})^{\ast} = (E,\beta_{C},\alpha_{C})$.
	Theorems~\ref{Thm:CoboundaryDualSupp} and~\ref{Thm:Greene} imply that}
    \begin{align*}
		Z_{C^{\perp},f}^{[m]}(x,y)
		& =
		Z_{D_{C^{\perp}},f}(|R|^{m},x,y)\\
		& =
		Z_{(D_{C})^{\ast},f}(|R|^{m},x,y)\\
		& =
		(-1)^{d}
		(|R|^{m})^{-\alpha_{C}(E) + d}
		Z_{D_{C},f}(|R|^m, x+(|R|^{m}-1)y,x-y)\\
		& =
		(-1)^{d}
		\frac{(|R|^{m})^d}{|C|^{m}}
		Z_{C,f}^{[m]}(x+(|R|^{m}-1)y,x-y).
	\end{align*}
	Hence, the theorem is proved. 
\end{proof}

\section{Invariant theory}\label{Sec:Inv}

In this section, we consider linear codes over finite fields. Let $\FF_{q}$ be a 
finite field of order~$q$, where $q$ is a prime power.
A linear code $C$ of length $n$ is a linear subspace of $\FF_{q}^{n}$ with ordinary
inner product: 
$\bm{u} \cdot \bm{v}
:=
u_{1}v_{1} + \cdots + u_{n}v_{n}$. 
If $q$ is an even power of an arbitrary prime~$p$,
then it is convenient to consider another inner product
given by
$
\bm{u} \cdot \bm{v}
:=
u_{1}\overline{v_{1}} + \cdots + u_{n}\overline{v_{n}},
$
where $\overline{v_{i}}:= {v_{i}}^{\sqrt{q}}$.
Then the dual $C^{\perp}$ of a linear code $C$ is defined as follows: 
$C^{\perp}=\{ \bm{v}\in \FF_{q}^{n}\ \mid \ \bm{u} \cdot \bm{v} = 0 \mbox{ for all }\bm{x}\in C\}$. Now we have the following $\FF_{q}$-analogue of the MacWilliams identity stated in Theorem~\ref{Thm:MacWilliams}.

\begin{thm}[MacWilliams-type identity]
	Let $W_{C,f}^{[m]}(x,y)$ be the $m$-tuple harmonic weight enumerator of an $\FF_{q}$-linear code $C$ of length~$n$ associated to $f \in \Harm_{d}(n)$.  
	Then
	\[
	W_{C,f}^{[m]}(x,y) 
	= 
	(xy)^{d} 
	Z_{C,f}^{[m]}(x,y),
	\]
	where $Z_{C,f}^{[m]}$ is a homogeneous polynomial of degree~$n-2d$,
	satisfying
	\[
	Z_{C^{\perp},f}^{[m]}
	(x,y)
	=
	(-1)^{d}
	\dfrac{(q^{n/2})^{m}}{|C|^{m}}
	Z_{C,f}^{[m]}
	\left(
	\dfrac{x+(q^{m}-1)y}{q^{m/2}},
	\dfrac{x-y}{q^{m/2}}
	\right).
	\]
\end{thm}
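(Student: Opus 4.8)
The plan is to derive the $\FF_q$-case directly from the Frobenius-ring result (Theorem~\ref{Thm:MacWilliams}) together with the Greene-type identity (Theorem~\ref{Thm:Greene}), by taking $R = \FF_q$ and then rescaling the variables. Since a finite field is a finite Frobenius ring, Theorem~\ref{Thm:MacWilliams} already gives that $W_{C,f}^{[m]}(x,y) = (xy)^d Z_{C,f}^{[m]}(x,y)$ with $Z_{C,f}^{[m]}$ homogeneous of degree $n-2d$, and that
\[
	Z_{C^{\perp},f}^{[m]}(x,y)
	=
	(-1)^{d}\frac{(q^{m})^{d}}{|C|^{m}}
	Z_{C,f}^{[m]}\bigl(x+(q^{m}-1)y,\;x-y\bigr),
\]
for the ordinary inner product. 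So for the ordinary inner product only the cosmetic rescaling remains, and for the case $q=p^{2k}$ with the Hermitian inner product I must additionally check that the dual behaves the same way at the level of the associated demi-matroid.

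The key steps, in order, are as follows. First I would record that $|C|\cdot|C^{\perp}| = q^n$ for either inner product on $\FF_q^n$ (nondegeneracy of the bilinear, resp.\ sesquilinear, form), so that $\alpha_C(E) = \log_q|C|$ and $\beta_C(E) = n - \alpha_C(E)$; this is Remark~\ref{Rem:AlphaBeta}. Second, for the Hermitian case I would note that the Frobenius automorphism $v \mapsto v^{\sqrt q}$ of $\FF_q$ permutes the codewords and preserves supports, so $C^{\perp}$ in the Hermitian sense is the image under a coordinatewise bijection of the ordinary dual; consequently $|C^{\perp}\backslash(E-X)|$ and $|C^{\perp}/(E-X)|$ are the same as for the ordinary dual, hence $D_{C^{\perp}} = (D_C)^{\ast}$ holds in the Hermitian case as well. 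This means Remark~2.5 and Theorem~\ref{Thm:CoboundaryDualSupp} apply verbatim, so the conclusion of Theorem~\ref{Thm:MacWilliams} holds unchanged for the Hermitian inner product. Third, I would apply the homogeneous substitution $x \mapsto x/q^{m/2}$, $y \mapsto y/q^{m/2}$: since $Z_{C^{\perp},f}^{[m]}$ is homogeneous of degree $n-2d$, the left side picks up a factor $(q^{m/2})^{-(n-2d)} = (q^{n/2})^{-m}(q^m)^{d}$, while inside the argument on the right, $x+(q^m-1)y \mapsto (x+(q^m-1)y)/q^{m/2}$ and $x-y \mapsto (x-y)/q^{m/2}$; collecting the constants $(-1)^d (q^m)^d/|C|^m$ and the new factor $(q^{n/2})^{-m}(q^m)^{d}$ and dividing through, one reads off
\[
	Z_{C^{\perp},f}^{[m]}(x,y)
	=
	(-1)^{d}\frac{(q^{n/2})^{m}}{|C|^{m}}
	Z_{C,f}^{[m]}\!\left(\frac{x+(q^{m}-1)y}{q^{m/2}},\;\frac{x-y}{q^{m/2}}\right),
\]
which is the claimed identity. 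Alternatively, and perhaps more transparently, one can route through Corollary~\ref{Cor:Greene}: write both $Z_{C,f}^{[m]}$ and $Z_{C^{\perp},f}^{[m]}$ in terms of $T(D_C,f;-,-)$ using $\alpha_{C^{\perp}}(E)=\beta_C(E)=n-\alpha_C(E)$, and check the two Tutte-polynomial expressions agree after the substitution — this is the computation sketched in the commented-out block following the alternative proof of Theorem~\ref{Thm:MacWilliams}.

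The only genuine obstacle is bookkeeping rather than mathematics: I expect the main care to be needed in the Hermitian case, making sure that "dual" in the statement refers to the Hermitian dual and that $D_{C^\perp}=(D_C)^\ast$ survives — once that is settled, the proof is a one-line homogeneity rescaling of Theorem~\ref{Thm:MacWilliams}. The homogeneity-degree arithmetic $(q^{m/2})^{-(n-2d)} = (q^{n/2})^{-m}(q^m)^d$ and the cancellation of $(q^m)^d$ against $(q^m)^{-d}$ should be written out once to avoid sign or exponent slips, but it is routine.
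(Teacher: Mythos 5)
Your proposal is correct and is essentially the paper's own (implicit) approach: the paper states this theorem without proof as the $\FF_q$-analogue of Theorem~\ref{Thm:MacWilliams}, and your derivation --- specializing to $R=\FF_q$ and using homogeneity of degree $n-2d$ to absorb the rescaling $x\mapsto x/q^{m/2}$, $y\mapsto y/q^{m/2}$, with the exponent bookkeeping $(q^{m/2})^{-(n-2d)}=(q^{n/2})^{-m}(q^m)^d$ --- is exactly the intended one. Your additional observation that the Hermitian dual is the image of the ordinary dual under the support-preserving coordinatewise Frobenius bijection, so that $D_{C^{\perp}}=(D_C)^{\ast}$ survives, is a worthwhile detail the paper glosses over.
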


A linear code $C$ is called self-dual if $C=C^{\perp}$. 
In this section, we consider the following self-dual codes (see~\cite{CS1999,HP2003}): 
\begin{tabbing}
	Type I: A code is defined over $\FF_{2}^{n}$ with all weights divisible by $2$,\\
	Type II: A code is defined over $\FF_{2}^{n}$ with all weights divisible by $4$,\\
	Type III: A code is defined over $\FF_{3}^{n}$ with all weights divisible by $3$,\\
	Type IV: A code is defined over $\FF_{4}^{n}$ with all weights divisible by $2$. \
\end{tabbing}
For detailed expressions of these codes, see~\cite{BMS1972, Gleason, MMS1972, NRS}.


Let $X\in \{\textrm{I},\textrm{II},\textrm{III},\textrm{IV}\}$ and 
$C$ be a Type~X code of length~$n$. 
In this section, 
we show that for Type X codes, 
$Z_{C,f}^{[m]}(x,y)$ is a relative invariant for a group and 
its character. Moreover, 
we give explicit generators of such relative invariant spaces. 

Let
\[
S_{m,q}
= 
\dfrac{1}{\sqrt{q^{m}}}
\begin{pmatrix}
	1 & (q^{m}-1)\\
	1 & -1
\end{pmatrix}.
\]
We consider the groups 
\begin{enumerate}
	\item 
	$G_{m}^{\textrm{I}} = \langle S_{m,2},\omega_2 I \rangle$,
	
	\item 
	
	$G_{m}^{\textrm{II}} = \langle S_{m,2}, \omega_8 I\rangle$, 
	\item 
	
	$G_{m}^{\textrm{III}} = \langle S_{m,3}, \omega_4 I\rangle$, 
	\item 
	
	$G_{m}^{\textrm{IV}} = \langle S_{m,4}, \omega_2 I\rangle$, 
\end{enumerate}
together with the character
\begin{enumerate}
	\item 
	$\chi^{\textrm{I}}_{d}(S_{m,2}) := (-1)^{-d},\ \chi^{\rm{I}}_{d}(\omega_2 I) := (\omega_2)^{-d}$,
	\item 
	$\chi^{\textrm{II}}_{d}(S_{m,2}) := (-1)^{-d},\ \chi^{\rm{II}}_{d}(\omega_8 I) := (\omega_8)^{-d}$,
	\item 
	$\chi^{\textrm{III}}_{d}(S_{m,3}) := (-1)^{-d},\ \chi^{\rm{III}}_{d}(\omega_4 I) := (\omega_4)^{-d}$,
	\item 
	$\chi^{\textrm{IV}}_{d}(S_{m,4}) := (-1)^{-d},\ 
	\chi^{\rm{IV}}_{d}(\omega_2 I) := (\omega_2)^{-d}$, 
\end{enumerate}
where $I$ is the identity matrix, and 
$\omega_k=\exp(2\pi i/k)$. 

Let $X\in \{\textrm{I},\textrm{II},\textrm{III},\textrm{IV}\}$ and 
$C$ be a Type~X code of length~$n$. 
Then for $f \in \Harm_{d}(n)$, 
its $m$-tuple Hamming weight enumerator associated to $f$ 
is written by 
\[
W_{C,f}^{[m]}(x,y) = (xy)^{d} Z_{C,f}^{[m]}(x,y). 
\]
Then by Theorem \ref{Thm:MacWilliams}, 
$Z_{C,f}^{[m]}(x,y)$ is a relative invariant of $G^\textrm{X}$ with respect to $\chi^\textrm{X}_d$: 
\[
Z_{C,f}^{[m]}(g(x,y)^{T}) 
= 
\chi^\textrm{X}_{d}(g) Z_{C,f}^{[m]}(x,y).
\]
Hence, we have 
\[
Z_{C,f}^{[m]}(x,y)\in I_{G^{\rm{X}},\chi^{\rm{X}}_d}, 
\]
where $I_{G^{\rm{X}},\chi^{\rm{X}}_d}$ is the space of relative invariants 
of $G^\textrm{X}$ with respect to $\chi^\textrm{X}_d$: 
\[
I_{G^{\rm{X}},\chi^{\rm{X}}_d}=\{P(x,y)\in \CC[x,y]\mid g.P=\chi^{\rm{X}}_d(g)P,\ 
\forall g\in G^{\rm{X}}\}, 
\]
where $(g.P)(x)=P(gx^T)$. 
In the following theorem, we give explicit generators of 
$I_{G^{\rm{X}},\chi^{\rm{X}}_d}$. 

\begin{thm}\label{Thm:Invariant}
	\begin{enumerate}
		\item
		Let $\mathfrak{R}^{\rm{I}}=\CC[P^{\rm{I}}_{8,1},P^{\rm{I}}_{8,2}]$. Then we have 
		\[
		I_{G_{m}^{\rm{I}},\chi^{\rm{I}}_d}=
		\begin{cases}
			&\mathfrak{R}^{\rm{I}}\ (k\equiv 0\pmod{2}),\\
			&Q^{\rm{I}}_{1,1,1}\mathfrak{R}^{\rm{I}}\ (k\equiv 1\pmod{2}),
		\end{cases}
		\]
		where the $P^{\rm{I}}_{\ast,\ast}$ and $Q^{\rm{I}}_{\ast,\ast,\ast}$ are 
		listed in~\cite{miezakiweb}.
		
		\item 
		Let $\mathfrak{R}^{\rm{II}}=\CC[P^{\rm{II}}_{8,1},P^{\rm{II}}_{8,2}]$. Then we have 
		\[
		I_{G_{m}^{\rm{II}},\chi^{\rm{II}}_d}=
		\begin{cases}
			&\mathfrak{R}^{\rm{II}}\oplus
			Q^{\rm{II}}_{8,1,0}\mathfrak{R}^{\rm{II}}\oplus
			Q^{\rm{II}}_{8,2,0}\mathfrak{R}^{\rm{II}}\oplus
			Q^{\rm{II}}_{8,3,0}\mathfrak{R}^{\rm{II}}\ (k\equiv 0\pmod{8}),\\
			&Q^{\rm{II}}_{7,1,1}\mathfrak{R}^{\rm{II}}\oplus
			Q^{\rm{II}}_{7,2,1}\mathfrak{R}^{\rm{II}}\oplus
			Q^{\rm{II}}_{7,3,1}\mathfrak{R}^{\rm{II}}\oplus
			Q^{\rm{II}}_{7,4,1}\mathfrak{R}^{\rm{II}}\ (k\equiv 1\pmod{8}),\\
			&Q^{\rm{II}}_{6,1,2}\mathfrak{R}^{\rm{II}}\oplus
			Q^{\rm{II}}_{6,2,2}\mathfrak{R}^{\rm{II}}\oplus
			Q^{\rm{II}}_{6,3,2}\mathfrak{R}^{\rm{II}}\oplus
			Q^{\rm{II}}_{6,4,2}\mathfrak{R}^{\rm{II}}\ (k\equiv 2\pmod{8}),\\
			&Q^{\rm{II}}_{5,1,3}\mathfrak{R}^{\rm{II}}\oplus
			Q^{\rm{II}}_{5,2,3}\mathfrak{R}^{\rm{II}}\oplus
			Q^{\rm{II}}_{5,3,3}\mathfrak{R}^{\rm{II}}\oplus
			Q^{\rm{II}}_{13,1,3}\mathfrak{R}^{\rm{II}}\ (k\equiv 3\pmod{8}),\\
			&Q^{\rm{II}}_{4,1,4}\mathfrak{R}^{\rm{II}}\oplus
			Q^{\rm{II}}_{4,2,4}\mathfrak{R}^{\rm{II}}\oplus
			Q^{\rm{II}}_{4,3,4}\mathfrak{R}^{\rm{II}}\oplus
			Q^{\rm{II}}_{12,1,4}\mathfrak{R}^{\rm{II}}\ (k\equiv 4\pmod{8}),\\
			&Q^{\rm{II}}_{3,1,5}\mathfrak{R}^{\rm{II}}\oplus
			Q^{\rm{II}}_{3,2,5}\mathfrak{R}^{\rm{II}}\oplus
			Q^{\rm{II}}_{11,1,5}\mathfrak{R}^{\rm{II}}\oplus
			Q^{\rm{II}}_{11,2,5}\mathfrak{R}^{\rm{II}}\ (k\equiv 5\pmod{8}),\\
			&Q^{\rm{II}}_{2,1,6}\mathfrak{R}^{\rm{II}}\oplus
			Q^{\rm{II}}_{2,2,6}\mathfrak{R}^{\rm{II}}\oplus
			Q^{\rm{II}}_{10,1,6}\mathfrak{R}^{\rm{II}}\oplus
			Q^{\rm{II}}_{10,2,6}\mathfrak{R}^{\rm{II}}\ (k\equiv 6\pmod{8}),\\
			&Q^{\rm{II}}_{1,1,7}\mathfrak{R}^{\rm{II}}\oplus
			Q^{\rm{II}}_{9,1,7}\mathfrak{R}^{\rm{II}}\oplus
			Q^{\rm{II}}_{9,2,7}\mathfrak{R}^{\rm{II}}\oplus
			Q^{\rm{II}}_{9,3,7}\mathfrak{R}^{\rm{II}}\ (k\equiv 7\pmod{8}), 
		\end{cases}
		\]
		where the $P^{\rm{II}}_{\ast,\ast}$ and $Q^{\rm{II}}_{\ast,\ast,\ast}$ are 
		listed in~\cite{miezakiweb}.

		\item 
		Let $\mathfrak{R}^{\rm{III}}=\CC[P^{\rm{III}}_{4,1},P^{\rm{III}}_{4,2}]$. Then we have 
		\[
		I_{G_{m}^{\rm{III}},\chi_d}=
		\begin{cases}
			&\mathfrak{R}^{\rm{III}}\oplus
			Q^{\rm{III}}_{4,1,0}\mathfrak{R}^{\rm{III}}\ (k\equiv 0\pmod{4}),\\
			&Q^{\rm{III}}_{3,1,1}\mathfrak{R}^{\rm{III}}\oplus
			Q^{\rm{III}}_{3,2,1}\mathfrak{R}^{\rm{III}}\ (k\equiv 1\pmod{4}),\\
			&Q^{\rm{III}}_{2,1,2}\mathfrak{R}^{\rm{III}}\oplus
			Q^{\rm{III}}_{2,2,2}\mathfrak{R}^{\rm{III}}\ (k\equiv 2\pmod{4}),\\
			&Q^{\rm{III}}_{1,1,3}\mathfrak{R}^{\rm{III}}\oplus
			Q^{\rm{III}}_{5,1,3}\mathfrak{R}^{\rm{III}}\ (k\equiv 3\pmod{4}),
		\end{cases}
		\]
		where the $P^{\rm{III}}_{\ast,\ast}$ and $Q^{\rm{III}}_{\ast,\ast,\ast}$ are 
		listed in~\cite{miezakiweb}. 
				
		\item
		Let $\mathfrak{R}^{\rm{IV}}=\CC[P^{\rm{IV}}_{2,1},P^{\rm{IV}}_{2,2}]$. 
        Then we have 
		\[
		I_{G_{m}^{\rm{IV}},\chi^{\rm{IV}}_d}=
		\begin{cases}
			&\mathfrak{R}^{\rm{IV}}\ (k\equiv 0\pmod{2}),\\
			&Q^{\rm{IV}}_{1,1,1}\mathfrak{R}^{\rm{IV}}\ (k\equiv 1\pmod{2}),
		\end{cases}
		\]
		where the $P^{\rm{IV}}_{\ast,\ast}$ and $Q^{\rm{IV}}_{\ast,\ast,\ast}$ are 
		listed~\cite{miezakiweb}. 

	\end{enumerate}
	
\end{thm}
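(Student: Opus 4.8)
The plan is to reduce the determination of each relative invariant space $I_{G^{\mathrm{X}},\chi^{\mathrm{X}}_d}$ to a classical invariant-theoretic computation, exactly parallel to the way the Gleason-type theorems for ordinary weight enumerators are proved. The starting point is the discussion immediately preceding the theorem: by Theorem~\ref{Thm:MacWilliams} (and its $\FF_q$-version), the polynomial $Z_{C,f}^{[m]}(x,y)$ attached to a Type~X self-dual code $C$ is fixed, up to the character $\chi^{\mathrm{X}}_d$, by the action of the matrix $S_{m,q}$ (the MacWilliams transform for $m$-tuples over $\FF_q$), and the divisibility of all weights in $C$ by the relevant integer forces invariance, up to the same character, under the scalar matrix $\omega_k I$ with $k\in\{2,8,4,2\}$ for $X\in\{\mathrm{I},\mathrm{II},\mathrm{III},\mathrm{IV}\}$. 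Hence $Z_{C,f}^{[m]}\in I_{G^{\mathrm{X}}_m,\chi^{\mathrm{X}}_d}$, which is the inclusion already recorded; the content of the theorem is the reverse, i.e.\ an explicit description of the whole space.

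First I would note that each group $G^{\mathrm{X}}_m=\langle S_{m,q},\omega_k I\rangle$ is finite (this is checked by computing the order of $S_{m,q}$, which like the classical $m=1,q=2$ case is finite, and observing that $\omega_k I$ is central of finite order). For a finite group $G\subseteq GL_2(\CC)$ and a one-dimensional character $\chi$, the space $\bigoplus_d I_{G,\chi^d}$ of relative invariants is a finitely generated module over the ring of absolute invariants $I_G=I_{G,\mathbf 1}$, and by Molien's theorem one can compute the Hilbert series of $I_{G,\chi^d}$ for each residue class of $d$ modulo the order of $\chi$. The plan is: (1) identify $I_G$ as a polynomial ring $\CC[P_{a,1},P_{a,2}]$ on two algebraically independent homogeneous generators of the stated degrees (degree $8$ for Types I, II; degree $4$ for Type III; degree $2$ for Type IV), which follows because $G$ acts on $\CC[x,y]$ as a finite reflection-type group whose absolute invariant ring is free; (2) for each residue class $k$ of $d$ modulo $2$, $8$, $4$, $2$ respectively, compute via Molien the Hilbert series of $I_{G,\chi^k_d}$ as a module over $\mathfrak R^{\mathrm{X}}$, read off the number and degrees of module generators, and exhibit explicit generators $Q^{\mathrm{X}}_{\ast,\ast,\ast}$ with those degrees (the concrete polynomials being recorded in~\cite{miezakiweb}); (3) verify that the listed $Q$'s are indeed relative invariants with the right character and are $\mathfrak R^{\mathrm{X}}$-linearly independent, so that by the Hilbert series count they generate freely, giving the direct-sum decompositions stated.

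The main obstacle is step~(2)/(3): one must actually produce the explicit generators $P^{\mathrm{X}}_{\ast,\ast}$ and $Q^{\mathrm{X}}_{\ast,\ast,\ast}$ and check the module structure (freeness over $\mathfrak R^{\mathrm{X}}$, or in the Type~II $k\equiv 3,\dots,7$ cases the precise four-term decompositions) — this is a substantial but essentially mechanical Magma/Mathematica computation, which is why the generators are tabulated externally rather than in the text. A secondary subtlety is that the character values are prescribed as $\chi^{\mathrm{X}}_d(S_{m,q})=(-1)^{-d}$ and $\chi^{\mathrm{X}}_d(\omega_k I)=\omega_k^{-d}$, so one must confirm these are consistent on $G^{\mathrm{X}}_m$ (i.e.\ respect all relations among $S_{m,q}$ and $\omega_k I$) before Molien's formula for $I_{G,\chi^d}$ can be applied; this consistency is where the specific orders $2,8,4,2$ and the factor $(xy)^d$ in $W^{[m]}_{C,f}=(xy)^dZ^{[m]}_{C,f}$ enter. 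Once consistency and the Hilbert-series count are in hand, matching them against the exhibited generators finishes the proof.
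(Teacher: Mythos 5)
Your proposal matches the paper's proof in essence: the paper also computes the Molien series of relative invariants for $G_m^{\mathrm{X}}$ with respect to $\chi_d^{\mathrm{X}}$ (obtaining, e.g., $(1+3t^8)/(1-t^8)^2$ for Type~II with $k\equiv 0 \pmod 8$), verifies that the tabulated polynomials $P^{\mathrm{X}}_{\ast,\ast}$ and $Q^{\mathrm{X}}_{\ast,\ast,\ast}$ lie in the space, and concludes by comparing dimensions degree by degree. Your added remarks on finiteness of the group, consistency of the character, and $\mathfrak{R}^{\mathrm{X}}$-linear independence of the $Q$'s are details the paper leaves implicit, but the route is the same.
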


\begin{proof}
	We give a proof of $k\equiv0\pmod{8}$ in (2). 
	The other cases can be proved similarly. 
	
	Let 
	\[
	a_{\chi^{\rm{II}}_d,k}=\dim\{P\in I_{G_m^{\rm{II}},\chi_d^{\rm{II}}}\mid \deg(P)=k\}. 
	\]
	
	We can compute $a_{\chi^{\rm{II}}_d,k}$ by Molien's series: 
	\[
	\sum_{d\geq 0}a_{\chi^{\rm{II}}_d,k}t^k
	=\frac{1}{|G_m^{\rm{II}}|}\sum_{g\in G_m^{\rm{II}}}
	\frac{\overline{\chi^{\rm{II}}_d(g)}}{\det(I-tg)}. 
	\]
	(For the details, see~\cite{Molien}, and \cite{Bachoc,CS1999,MS1977}.)
	Then we have 
	\[
	\sum_{d\geq 0}a_{\chi^{\rm{II}}_d,k}t^k=\frac{1 + 3 t^8}{(1 - t^8)^2}. 
	\]
	It is easy to verify that the polynomials 
	$P^{\rm{II}}_{8,1},P^{\rm{II}}_{8,2}, 
	Q^{\rm{II}}_{8,1,0},
	Q^{\rm{II}}_{8,2,0},
	Q^{\rm{II}}_{8,3,0}
	$ belong to the spaces $I_{G_m^{\rm{II}},\chi^{\rm{II}}_d}$ and 
	the result then follows from the equality of the dimensions.
\end{proof}

\section{Concluding remarks}

Crapo and Rota's~\cite{CR1970}
Critical Theorem shows how to count the number of codeword supports of a linear code by evaluating a polynomial called the characteristic polynomial, defined on the contraction of a matroid associated to the code. An extension of the Critical Theorem was given
by Britz and Shiromoto~\cite{BS2008} by presenting a relation between 
the $m$-tuple support weight enumerator of a linear code and the generalized coboundary polynomial of 
a matroid from the code. 
Moreover, Wei~\cite{Wei1991} presented a celebrated duality theorem, 
known as Wei's Duality Theorem, 
that established a remarkable relation between 
the generalized Hamming weights of a linear code and those of its dual code. 
The above discussions give rise to a natural question: 
is there a harmonic generalization of the Critical Theorem as well as the Wei's celebrated Duality Theorem?
We shall investigate a suitable setting in some of our subsequent papers that answers this question.

\section*{Acknowledgements}
\noindent
The authors would also like to thank the anonymous
reviewers for their beneficial comments on an earlier version of the manuscript.
The fourth named author is supported by JSPS KAKENHI (22K03277).

\section*{Data availability statement}
\noindent
The data that support the findings of this study are available from
the corresponding author.


\begin{thebibliography}{99}

\bibitem{Bachoc}
C.~Bachoc,
On harmonic weight enumerators of binary codes,
{\sl Des.\ Codes Cryptogr.} {\bf 18} (1999), no.~1-3, 11--28.

\bibitem{BachocNonBinary}
C.~Bachoc,
Harmonic weight enumerators of nonbinary codes and MacWilliams identities,
{\sl Codes and Association Schemes (Piscataway, NJ, 1999)}, 1--23,
{\sl DIMACS Ser. Discrete Math.\ Theoret.\ Comput.} {\bf 56},
Amer.\ Math.\ Soc., Providence, RI, 2001. 

\bibitem{BMS1972}
E.R.~Berlekamp, F.J.~MacWilliams and N.J.A.~Sloane, 
Gleason's theorem on self-dual codes, 
{\sl IEEE Trans.\ Inform.\ Theory} 
{\bf IT-18} (1972), 409--414.

\bibitem{Magma}
W.~Bosma, J.~Cannon, and C.~Playoust, 
The Magma algebra system. I. The user language, 
{\em J. Symbolic Comput.}
{\bf 24} (1997),  235--265.

\bibitem{BrCa22}
T.~Britz and P.J.~Cameron,  
Codes, 
in 
{\sl Handbook of the Tutte Polynomial and Related Topics (1st ed.)}
(eds.\ J.A.~Ellis-Monaghan and I.~Moffatt), 328--344,
Chapman and Hall/CRC, 2022. 

\bibitem{BJMS2012}
T.~Britz, T.~Johnsen, D.~Mayhew and K.~Shiromoto,
Wei-type duality theorems for matroids,
{\sl Des.\ Codes Cryptogr.} {\bf 62} (2012), 331--341.

\bibitem{BS2008}
T.~Britz and K.~Shiromoto,
A MacWilliams-type identity for matroids,
{\sl Discrete Math.} {\bf 308} (2008), 4551--4559.

\bibitem{BSW2015}
T.~Britz, K.~Shiromoto and T.~Westerb\"ack,
Demi-matroids from codes over finite Frobenius rings,
{\sl Des.\ Codes Cryptogr.} {\bf 75} (2015), 97--107. 

\bibitem{CMO20xx}
H.S.~Chakraborty, T.~Miezaki and M.~Oura,
Harmonic Tutte polynomials of matroids,
{\sl Des. Codes Cryptogr.}
{\bf 91} (2023), 2223--2236.


\bibitem{Crapo}
H.H.~Crapo, 
The Tutte polynomial, 
{\sl Aequationes Math.} {\bf 3} (1969), 211--229.


\bibitem{CR1970}
H.H.~Crapo and G.C.~Rota, 
{\sl On the Foundations of Combinatorial Theory: Combinatorial Geometries}, 
preliminary edition, The M.I.T. Press, Cambridge, MA and London, 1970.


\bibitem{CS1999}
J.H.~Conway and N.J.A.~Sloane, 
{\sl Sphere Packings Lattices and Groups},
third edition, Springer, New York, 1999.


\bibitem{Delsarte}
P.~Delsarte, 
Hahn polynomials, discrete harmonics, and $t$-designs, 
{\sl SIAM J.~Appl.\ Math.} {\bf 34} (1978), 157--166.


\bibitem{Gleason} 
A.M.~Gleason,
Weight polynomials of self-dual codes and the MacWilliams identities, 
in: {\sl Actes du Congr\`es International des Math\'ematiciens
(Nice, 1970), Tome 3}, Gauthier-Villars, Paris, 1971, pp.~211--215.


\bibitem{Greene1976}
C.~Greene, 
Weight enumeration and the geometry of linear codes, 
{\sl Studia Appl. Math.}
{\bf 55} (1976), 119--128. 


\bibitem{HP2003}
W.~Huffman and V.~Pless,
{\sl Fundamentals of Error-Correcting Codes},
Cambridge University Press, Cambridge, 2003.


\bibitem{Karlin}
S.~Karlin and J.~McGregor,
{The Hahn polynomials, formulas and an application},
{\sl Scripta Math.} {\bf 26} (1961), 33--46.


\bibitem{Lam1999}
T.Y.~Lam,
{\sl Lectures on Modules and Rings},
Springer-Verlag, New York, 1999.


\bibitem{MacWilliams}
F.J.~MacWilliams, 
A theorem on the distribution of weights in a systematic code, 
{\sl Bell System Tech.~J.}
{\bf 42} (1963), 79--84. 


\bibitem{MMS1972}
F.J.~MacWilliams, C.L.~Mallows and N.J.A.~Sloane, 
Generalizations of Gleason's theorem on weight enumerators of self-dual codes, 
{\sl IEEE Trans.\ Inform.\ Theory} 
{\bf IT-18} (1972), 794--805.


\bibitem{MS1977}
F.J.~MacWilliams, N.J.A.~Sloane, 
{\sl The Theory of Error-Correcting Codes}, 
North-Holland Editor, 1977.



{
\bibitem{miezakiweb}
T.~Miezaki,\\
Tsuyoshi Miezaki's website: 
http://www.f.waseda.jp/miezaki/data.html
}


\bibitem{Molien}
T.~Molien, 
\"{U}ber die Invarianten der linearen Substitutionsgruppen, 
{\sl Sitzungber.\ K\"{o}nig.\ Preuss.\ Akad.\ Wiss.} 
{\bf 52} (1897), 1152--1156.







\bibitem{NRS}
G.~Nebe, E.M.~Rains and N.J.A.~Sloane, 
{\sl Self-Dual Codes and Invariant Theory}, 
Algorithms and Computation in Mathematics, vol.~{\bf 17}, Springer-Verlag,
Berlin, 2006.

\bibitem{shiromoto96} K.~Shiromoto,
A new MacWilliams-type identity for linear codes,
{\sl Hokkaido Math.~J.} {\bf 25} (1996), 651--656.


\bibitem{Tanabe2001}
K.~Tanabe,
{A new proof of the Assmus--Mattson theorem for non-binary codes},
{\sl Des. Codes Cryptogr.} {\bf 22} (2001), 149--155.



\bibitem{Tutte1954}
W.T.~Tutte, 
A contribution to the theory of chromatic polynomial, 
{\sl Canadian J.~Math.}
{\bf 6} (1954), 80--91.

\bibitem{Tutte1967}
W.T.~Tutte, 
On dichromatic polynomials, 
{\sl J.~Comb. Theory}
{\bf 2} (1967), 301--320.

\bibitem{Wei1991}
V.K.~Wei,
Generalized Hamming weights for linear codes,
{\sl IEEE Trans. Inf. Theory}
{\bf 37} (1991), 1412--1418.


\bibitem{Mathematica}
Wolfram Research, Inc., Mathematica, Version 11.2, Champaign, IL (2017).

\end{thebibliography}
\end{document}